     \def\section{\@startsection{section}{1}%
     \z@{.7\linespacing\@plus\linespacing}{.5\linespacing}%
     {\bfseries
     \centering
     }}
     \def\@secnumfont{\bfseries}
\newtheorem{theorem}{Theorem}[section]
\newtheorem{lemma}[theorem]{Lemma}
\newtheorem{proposition}[theorem]{Proposition}
\newtheorem{assumption}[theorem]{Assumption}
\newtheorem{definition}[theorem]{Definition}
\newtheorem{remark}[theorem]{Remark}
\newtheorem{hypothesis}[theorem]{Hypothesis}
\numberwithin{equation}{section} \setcounter{page}{1}
\DeclareMathOperator{\tr}{Tr}
\newcommand{\Tr}{\mathop{\mathrm{Tr}}}
\renewcommand{\d}{\/\mathrm{d}\/}
\def\w{w^{\varepsilon}_{{v}^{\varepsilon}}}
\def\e{\varepsilon}
\def\t{t\wedge\tau_N}
\def\T{T\wedge\tau_N}
\def\x{\textbf{X}}
\def\a{\textbf{A}}
\def\b{\textbf{B}}
\def\y{\textbf{Y}}
\def\z{\textbf{Z}}
\def\w{\textbf{W}}
\def\v{\textbf{v}}
\begin{document}

\title[2-D MHD systems with jump processes]{2-D Magneto-hydrodynamic system with jump processes: Well posedness and Invariant measures}

\author[U. Manna]{Utpal Manna} \author[M. T. Mohan]{Manil T.
Mohan}\address{School of Mathematics, Indian Institute of Science
Education and Research, Thiruvananthapuram, India}
\email{manna.utpal@iisertvm.ac.in, manil@iisertvm.ac.in}

\subjclass[2000] {Primary 60H15; Secondary 35Q35, 76D03, 76D06}

\keywords{Stochastic MHD system, L\'{e}vy processes, Local
monotonicity, Invariant measures.}

\begin{abstract}
In this work we prove the existence and uniqueness of the strong
solution to the two-dimensional stochastic magneto-hydrodynamic
system perturbed by L\'{e}vy noise. The local monotonicity arguments
have been exploited in the proofs. The existence of a unique
invariant measures has been proved using the exponential stability
of solutions.
\end{abstract}

\maketitle \noindent

\section{Introduction}
\setcounter{equation}{0}

Magneto-hydrodynamics (MHD) is the branch of fluid mechanics which
studies the motion of electrically conducted fluid in the presence
of magnetic field. MHD system consists of the Navier-Stokes
equations, which describe the motion of a fluid in the electric
field coupled with the Maxwell equations, which describe the motion
of the fluid in the magnetic field (see Chandrasekhar \cite{CH}).
The deterministic MHD system has been studied extensively by various
mathematicians and physicists (Cabannes \cite{Ca}, Cowling
\cite{Cw}, Ladyzhenskaya \cite{La}, Landau and Lifshitz \cite{LL},
Sermange and Temam \cite{ST}, and Temam \cite{Te1}, to name a few)
over the past fifty years. Significant contributions in the
mathematical study of stochastic MHD system are due to Sritharan and
Sundar \cite{SS3}, Sundar \cite{Su}, and Barbu and Da Prato
\cite{VB}. The authors in \cite{SS3} prove the well posedness of the
martingale problem associated to two and three-dimensional MHD
system perturbed by white noise. In \cite{Su}, the author study the
existence and uniqueness of solutions to the two-dimensional MHD
system perturbed by more general noise, namely multiplicative noise
and additive fractional Brownian noise. The authors in \cite{VB}
establish the existence and uniqueness of an invariant measure via
coupling methods developed by Odasso \cite{Od}.

In this paper we consider the stochastic MHD system in the
non-dimensional form. Let $G\subset \mathbb{R}^2$ be a bounded,
open, simply-connected domain with a sufficiently smooth boundary
$\partial G$, then
\begin{eqnarray*}
&&\frac{\partial u}{\partial t} + (u\cdot\nabla)u -
\frac{1}{R_e}\Delta u - S(B\cdot\nabla)B +
\nabla\left(p+\frac{S|B|^2}{2}\right) \nonumber \\ &&\qquad\quad =
\sigma_1(u,B)\partial W_1(t) + \int_Zg_1(u,z)\tilde{N_1}(dt,dz)
\textrm{ in }G\times(0,T),
\end{eqnarray*}
along with the Maxwell's equations
\begin{eqnarray*}
&&\frac{\partial B}{\partial t} + (u\cdot\nabla)B +
\frac{1}{R_m}\textrm{curl}(\textrm{curl}\: B) - (B\cdot\nabla)u
\nonumber \\ &&\qquad\quad = \sigma_2(u,B)\partial W_2(t) +
\int_Zg_2(u,z)\tilde{N_2}(dt,dz)\textrm{ in } G\times(0,T),
\end{eqnarray*}
where the terms and the initial and the boundary conditions are
discussed in section 2. We reduce the above coupled equations into a
single equation whose abstract form is given by
\begin{eqnarray*}
\d \x + [\a\x + \b(\x)]\d t = \sigma(t,\x)\d W(t) +
\int_Zg(\x,z)\tilde{N}(dt,dz)
\end{eqnarray*}
with $\x(0)=\x_0$, where $\x$ denotes the transpose of $(u,B)$. The
operators $\a$ and $\b$ are defined in section 2. $W(t)$ is an
$H$-valued Wiener process with positive symmetric trace class
covariance operator $Q$. $\tilde{N}(dt,dz) =N(dt,dz)-\lambda(dz)dt$
is a compensated Poisson random measure(cPrm), where $N(dt,dz)$
denotes the Poisson random measure associated to Poisson point
process $\mathbf{p}(t)$ on $Z$ and $\lambda(dz)$ is a
$\sigma$-finite L\'{e}vy measure on $(Z,\mathscr{B}(Z))$.

The construction of the paper is as follows. In the next section, we
give the description of the deterministic MHD system and describe
the functional setting of the problem. In section $3$, we first
formulate the abstract stochastic model when the noise coefficients
are small and then prove certain a-priori energy estimates (both
$L^2$ and $L^p$) with exponential weights.  We then prove the
existence and uniqueness of the strong solution exploiting certain
monotonicity property of the operators. In section $4$, existence of
a unique invariant measures has been established using the tightness
property and the exponential stability of solutions.

\section{The Deterministic MHD System}
\setcounter{equation}{0}

In this section we first describe the deterministic MHD System with
suitable functional setting for the model. Also we will discuss
about the linear and non-linear operators and their properties. Some
of the results of this section are taken from Sritharan and Sundar
\cite{SS3}, Sundar \cite{Su}, and Temam \cite{Te1}. Some results
presented in this section are known but given for the sake of
completeness.

\subsection{The Description of the Deterministic MHD System}

 We consider the motion of a viscous incompressible
and resistive fluid filling $G$, where $G\subset\mathbb{R}^2$ is a
bounded, open, simply-connected, domain with a sufficiently smooth
boundary $\partial G$. We fix a final time $T>0$. The motion
described by the MHD system (see Temam \cite{Te1}) in the
non-dimensional form is given by
\begin{align}\label{magneto}
\frac{\partial u}{\partial t} + (u\cdot\nabla)u -
\frac{1}{R_e}\Delta u - S(B\cdot\nabla)B +
\nabla\left(p+\frac{S|B|^2}{2}\right)= f_1(t)\textrm{ in }
G\times(0,T),
\end{align}
where $p$ denotes the pressure of the fluid, $u(t,x) =
(u_1(t,x),u_2(t,x))$ the velocity of the particle of fluid which is
at the point $x$ at time $t$, $B(t,x)=(B_1(t,x),B_2(t,x))$ the
magnetic field at a point $x$ at time $t$ and $R_e$ the Reynolds
number. Let $S = \frac{M^2}{R_eR_m}$, where $M$ is the Hartman
number and $R_m$ is the magnetic Reynolds number, $\Delta$ denotes
the Laplace operator and $\nabla$ the gradient. The Maxwell's
equation is given by
\begin{eqnarray}\label{maxwell}
\frac{\partial B}{\partial t} &+ (u\cdot\nabla)B +
\frac{1}{R_m}\textrm{curl}(\textrm{curl}\: B) - (B\cdot\nabla)u =
f_2(t)\textrm{ in }G\times(0,T),
\end{eqnarray}
where
\begin{eqnarray}\label{condition}
\nabla\cdot u = 0\textrm{ and } \nabla\cdot B = 0 \textrm{ in }
G\times(0,T),
\end{eqnarray}
where $f_1(t)$ and $f_2(t)$ are the external forces acting on the
system.

The above system of equations deduced from the coupled equation of
Navier-Stokes and the Maxwell's equations. The Navier-Stokes part is
\begin{eqnarray*}
\frac{\partial u}{\partial t}+(u\cdot \nabla) u=\frac{j\times
B}{\rho}+\frac{qE}{\rho}-\frac{\nabla p}{\rho}+\nu\Delta u,\;\;
\nabla\cdot u=0,
\end{eqnarray*}
where $E$ is the electric field, $\rho$ is the mass density, $-q$ is
the charge of an electron and $\nu$ is the viscosity. Let $\mu_0$ be
the permeability of the free space, $\rho_c$ be the charge density
and $c$ be the speed of light, the Maxwell's part is
\begin{align*}
\frac{\partial B}{\partial t}=-\mathrm{curl} E,\;\; \mathrm{curl}
B=\mu_0 j+\frac{1}{c^2}\frac{\partial E}{\partial t},\;\;
\nabla\cdot B=0,\;\;\nabla\cdot E=\rho_c.
\end{align*}
But in most of the practical applications, only low-frequency
behavior is studied and thus the term $\frac{q}{\rho}E$ is ignored
from the Navier-Stokes part and the displacement current,
$\frac{1}{c^2}\frac{\partial E}{\partial t}$ from the Maxwell's
part. A resistive form of Ohm's law is used to eliminate $E$, the
electric field from the equations, $j=\sigma(E+u\times B)$, where
$\sigma$ denotes the electric conductivity of the fluid. (For more
details see Chapter 8 of Landau, Lifshitz, and Pitaevskii \cite{LLP}
and Section 7.7 of Jackson \cite{JJ}.) For a vector-valued function
$u = (u_1,u_2)$ defined on $G$, $\textrm{curl}\: u = \frac{\partial
u_2}{\partial x} - \frac{\partial u_1}{\partial y}.$ Let $\phi$ be a
scalar-valued function defined on $G$, then $\textrm{curl}\: \phi =
\left(\frac{\partial\phi}{\partial y},-\frac{\partial\phi}{\partial
x}\right).$ Also we know that $\textrm{curl}(\textrm{curl}\: u) =
\textrm{grad}\:\textrm{div}\:u - \Delta u$ is valid in the two
dimensions. Now let us give the boundary conditions for the
equations \eqref{magneto} and \eqref{maxwell}.
\begin{eqnarray}\label{bound1}
u(t,x) = 0\textrm{ on } (0,T) \times \partial G,\\
B\cdot\hat{n} = 0  \textrm{ and } \textrm{curl}\:B = 0 \textrm{ on }
(0,T)\times\partial G.\label{bound2}
\end{eqnarray}
The initial conditions are given by
\begin{eqnarray}\label{initial}
u(x,0) = u_0(x),\quad B(x,0) = B_0(x) \textrm{ for all } x\in G.
\end{eqnarray}
\noindent Here $\hat{n}$ denotes the unit outward normal to
$\partial G$. Also \eqref{bound1} is the nonslip condition and
\eqref{bound2} describes a perfectly conducting wall. The system
\eqref{magneto}-\eqref{initial} needs to be written as an abstract
evolution equation with
\begin{eqnarray}\label{equation1}
\x(t) = \x(0) + \int_0^t\left[-\a\x(s)-\b\left(\x(s)\right)\right]\d
s +\int_0^t \textbf{f}(t)dt,
\end{eqnarray}
for all $t\in[0,T]$. In the above formulation, $\x$ denotes the
transpose of $(u,B)$ and $\textbf{f}(t)$ is the transpose of
$\{f_1(t),f_2(t)\}$.

\subsection{Abstract Formulation of the Equations}
In this section we collect the needed information for the operator
formulation of the problem \eqref{magneto}-\eqref{initial} as in
Sermanage and Temam \cite{ST}. We consider the spaces $H = H_1 \times H_2$ and $V = V_1 \times V_2$, where
\begin{eqnarray*}
H_1 &=& \left\{\phi \in \textrm{L}^2(G) : \nabla \cdot \phi =
0,\;\; \phi\cdot\hat{n}\big|_{\partial G} = 0\right\},
H_2 = H_1,\\
V_1 &=& \left\{\phi \in H_0^1(G) : \nabla\cdot\phi = 0\right\}, \
V_2 = \left\{\phi \in H^1(G) : \nabla \cdot \phi = 0,
\phi\cdot\hat{n}\big|_{\partial G} = 0\right\},\\
H^1 &=&\left\{u\in \textrm{L}^2(G) : \nabla u \in
\textrm{L}^2(G)\right\},H_0^1 = \left\{u \in \textrm{L}^2(G) :
\nabla u \in \textrm{L}^2(G), u\big|_{\partial G} = 0\right\}.
\end{eqnarray*}
\noindent Here $H_1$ and $H_2$ are equipped with the
$\textrm{L}^2(G)$ norm. Let us define the inner product on $H$ by
$[\x_1,\x_2] = (u_1,u_2)_{H_1} + S(B_1,B_2)_{H_2}, \textrm{ where
}\x_i = \{u_i,B_i\}.$ Hence we get the inner product is equivalent
to $(\x_1,\x_2)_H = (u_1,u_2)_{H_1} + (B_1,B_2)_{H_2}$ and the norm
on $H$ is given by $\|\x\|_H = \sqrt{(\x,\x)_H}.$ Also the space
$V_1$ is endowed with the inner product given by
$[[\phi,\psi]]_{V_1} = (\nabla\phi,\nabla\psi)_{\textrm{L}^2(G)}.$
The norm on $V_1$ is given by $\|\phi\|_{V_1} =
\sqrt{[[\phi,\phi]]_{V_1}}$. Let us note that $V_1$ norm given here
is equivalent to the usual $H^1(G)$ norm since
$\|\nabla\phi\|_{\textrm{L}^2(G)} \geq C\|\phi\|_{\textrm{L}^2(G)}$
for a suitable $C$ by the Poincar\'{e} inequality. Let $V_2$ be
endowed with the inner product $[[\phi,\psi]]_{V_2} = (\textrm{curl}
\phi,\textrm{curl} \psi)_{\textrm{L}^2(G)}$ and the norm on $V_2$ is
given by $\|\phi\|_{V_2} = \sqrt{[[\phi,\phi]]_{V_2}}.$ From
Proposition 1.8 and Lemma 1.6 of Temam \cite{Te}, we conclude that
the norm given by $\left\{\|\phi\|^2_{\textrm{L}^2(G)} +
\|\textrm{curl}\:\phi\|^2_{\textrm{L}^2(G)} +
\|\phi\cdot\hat{n}\|^2_{H^{1/2}(\partial G)}\right\}^{1/2}$ is
equivalent to the $H^1(G)$ norm. Thus the space $V$ is endowed with
the scalar product: $[[\x_1,\x_2]] = [[u_1,u_2]]_{V_1} +
S[[B_1,B_2]]_{V_2}.$ Having defined the spaces $H$ and $V$, we have
the dense, continuous and compact embedding: $V\hookrightarrow
H\equiv H^\prime \hookrightarrow V^\prime.$

\subsection{The linear operator}
In this section we define the operator $\a$ that appear in
$\eqref{equation1}$ and discuss its properties. $\a$ is defined
through a bilinear coercive form.

Let us define a function $a : V \times V \rightarrow \mathbb{R}$ as
follows:
\begin{eqnarray}\label{linear}
a(\x_1,\x_2) = \frac{1}{R_e}[[u_1,u_2]]_{V_1} +
\frac{S}{R_m}[[B_1,B_2]]_{V_2}.
\end{eqnarray}

\begin{proposition}
The function $a(\cdot,\cdot)$ defined by \eqref{linear} is
continuous and coercive.
\end{proposition}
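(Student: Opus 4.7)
The proposition splits into two independent inequalities, continuity and coercivity, and both follow directly from unwinding the definition of $a(\cdot,\cdot)$ together with the definition of the norm on $V$, namely $\|\x\|_V^2 = \|u\|_{V_1}^2 + S\|B\|_{V_2}^2$ for $\x=(u,B)$.

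For continuity, the plan is to apply the Cauchy--Schwarz inequality separately to each of the two inner products appearing in \eqref{linear}. That gives
\begin{equation*}
|a(\x_1,\x_2)| \le \tfrac{1}{R_e}\|u_1\|_{V_1}\|u_2\|_{V_1} + \tfrac{S}{R_m}\|B_1\|_{V_2}\|B_2\|_{V_2}.
\end{equation*}
I would then write the second term as $\tfrac{1}{R_m}(\sqrt{S}\|B_1\|_{V_2})(\sqrt{S}\|B_2\|_{V_2})$, pull out the constant $M:=\max\{1/R_e,1/R_m\}$, and apply a second (finite-dimensional) Cauchy--Schwarz to the resulting sum of two products to arrive at $|a(\x_1,\x_2)|\le M\|\x_1\|_V\|\x_2\|_V$.

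For coercivity I set $\x_1=\x_2=\x=(u,B)$, so that
\begin{equation*}
a(\x,\x) = \tfrac{1}{R_e}\|u\|_{V_1}^2 + \tfrac{S}{R_m}\|B\|_{V_2}^2 \;\ge\; m\bigl(\|u\|_{V_1}^2 + S\|B\|_{V_2}^2\bigr) = m\|\x\|_V^2,
\end{equation*}
with $m:=\min\{1/R_e,1/R_m\}>0$, which is the required estimate.

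There is no real obstacle here; the only point to watch is that the factor $S$ appears in the $V_2$-part of $a$ but also in the definition of $\|\x\|_V^2$, and it is essential to group the constants correctly so that the extraction of $\max$ (resp.\ $\min$) produces the sum $\|u\|_{V_j}^2+S\|B\|_{V_2}^2$ rather than $\|u\|_{V_1}^2+\|B\|_{V_2}^2$. With that bookkeeping handled, continuity and coercivity each reduce to one line.
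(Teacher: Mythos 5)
Your proof is correct, and it is structurally the same two-step argument as the paper's (Cauchy--Schwarz in each component inner product, followed by a discrete Cauchy--Schwarz to assemble the product of $V$-norms; a one-line lower bound for coercivity). The genuine difference is the norm you work with. You use the intrinsic product norm $\|\x\|_V^2=\|u\|_{V_1}^2+S\|B\|_{V_2}^2$ coming from the scalar product $[[\x_1,\x_2]]=[[u_1,u_2]]_{V_1}+S[[B_1,B_2]]_{V_2}$ that the paper itself puts on $V$, so both inequalities close immediately with the explicit constants $M=\max\{1/R_e,1/R_m\}$ and $m=\min\{1/R_e,1/R_m\}$. The paper instead normalizes $R_e=R_m=1$ and detours through the equivalent $H^1(G)$ norms, which forces it to invoke the Poincar\'e inequality on $V_1$ and the Temam equivalence $\|\phi\|_{H^1}\simeq\bigl(\|\phi\|_{\mathrm{L}^2}^2+\|\mathrm{curl}\,\phi\|_{\mathrm{L}^2}^2+\|\phi\cdot\hat{n}\|_{H^{1/2}(\partial G)}^2\bigr)^{1/2}$ on $V_2$; in particular the paper's coercivity step, which bounds $\|\mathrm{curl}\,B\|^2$ below by $c\|B\|_{H^1}^2$ without the extra terms, is only justified via that equivalence restricted to $V_2$. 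Your route avoids these external inputs entirely and is the more self-contained of the two; the paper's route has the side benefit of recording that the estimates also hold with respect to the standard $H^1(G)\times H^1(G)$ norm. Your bookkeeping remark about where the factor $S$ must sit is exactly the right point to flag.
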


\begin{proof}
Without loss of generality let us assume that $R_e$ and $R_m$ be
equal to 1. Then there exists a constant $k$ such that
\begin{eqnarray*}
&&|a(\x_1,\x_2)| = \Big|[[u_1,u_2]]_{V_1} + S[[B_1,B_2]]_{V_2}\Big|\\
&&\qquad\qquad\quad\leq \|\nabla u_1\|_{\textrm{L}^2(G)} \|\nabla
u_2\|_{\textrm{L}^2(G)} + S\|\textrm{curl}\:B_1\|_{\textrm{L}^2(G)}
\|\textrm{curl}\:B_2\|_{\textrm{L}^2(G)}\\
&&\qquad\qquad\quad\leq k\Big(\|u_1\|_{H^1(G)} \|u_2\|_{H^1(G)} + S\|B_1\|_{H^1(G)} \|B_2\|_{H^1(G)}\Big)\\
&&\qquad\qquad\quad\leq k\Big(\|u_1\|^2_{H^1(G)} + \|B_1\|^2_{H^1(G)}\Big)^{1/2} \Big(\|u_2\|^2_{H^1(G)} + \|B_2\|^2_{H^1(G)}\Big)^{1/2}\\
&&\qquad\qquad\quad\leq k\|\x_1\|_V\|\x_2\|_V.
\end{eqnarray*}
Hence we obtain $|a(\x_1,\x_2)|\leq k\|\x_1\|_V\|\x_2\|_V\;
\textrm{for all }\;\x_1,\x_2\in V$. For the coercive property we use
the estimate $\frac{1}{k}\|u\|_{H^1(G)}\leq
\|u\|_{\textrm{L}^2(G)}\leq k\|u\|_{H^1(G)}$, consider
\begin{eqnarray*}
&&a(\x,\x) = [[u,u]]_{V_1} + S[[B,B]]_{V_2}= \|\nabla u\|^2 + S\|\textrm{curl}\:B\|^2\\
&&\qquad\quad\;\;\geq c\Big(\|u\|^2_{H^1(G)} +
\|B\|^2_{H^1(G)}\Big)=c\|\x\|^2_V.
\end{eqnarray*}
Hence we get for a positive constant $c$, $a(\x,\x) \geq
c\|\x\|^2_V$ for all $\x\in V$.
\end{proof}

By Lax-Milgram Theorem, there exists an operator $\a : V \rightarrow
V^\prime$ such that
\begin{eqnarray}\label{lin}
a(\x,\y) = \big(\a\x,\y\big)_{V\times V^\prime}\textrm{ for all }
\x,\y \in V.
\end{eqnarray}
Therefore $\a : V \rightarrow V^\prime$ can be restricted to a
self-adjoint operator $\a : \mathfrak{D}(\a) \rightarrow H$. Now we
can write $\mathfrak{D}(\a) = \mathfrak{D}(\a_1) \times
\mathfrak{D}(\a_2)$ where $\a_1, \a_2, \mathfrak{D}(\a_1),
\mathfrak{D}(\a_2)$ are obtained as follows:

Let us consider the ``Stokes" problem in $G$:
$$\frac{-1}{R_e}\Delta u + \nabla p = g  \textrm{ with } \nabla\cdot u = 0\textrm{ and } u\big|_{\partial G} = 0.$$
Let $\phi \in V_1$ and multiply the above equation with $\phi$.
Apply the integration by parts formula to obtain
$\frac{1}{R_e}[[u,\phi]]_{V_1} = (g,\phi)$ for $\phi \in V_1$.
Define $\a_1u = \frac{1}{R_e}[[u,\cdot]]_{V_1} = g \in V_1^\prime$.
By the Cattabriga regularity theorem, we can conclude that
$\mathfrak{D}(\a_1) = H^2(G)\cap V_1$ whenever $g\in H_1$.

Consider the elliptic problem in $G$ for defining $\a_2$ :
$$\frac{1}{R_m}\textrm{curl}\:\textrm{curl}\:B = g  \textrm{ with } \nabla\cdot B;\quad B\cdot\hat{n}\big|_{\partial G} = 0;
\quad \textrm{curl B}\big|_{\partial G} = 0,$$ so that $\a_2B =
\frac{1}{R_m}\textrm{curl}\:\textrm{curl B} = g.$ It is proved in
\cite{ST} that if $g \in H$, then
$$\mathfrak{D}(\a_2) = \big\{u \in H^2(G) : \nabla\cdot u = 0, u \cdot\hat{n}\big|_{\partial G} = 0
\textrm{ and } \textrm{curl u}\big|_{\partial G} = 0\big\}.$$

\subsection{The nonlinear operator}
We next define $\b$ that figures in \eqref{equation1}. Let us
consider the trilinear form $b(\cdot,\cdot,\cdot) : V \times V
\times V \rightarrow \mathbb{R}$ defined by
\begin{align}\label{nonlinear}
b(\x_1,\x_2,\x_3) = \tilde{b}(u_1,u_2,u_3) - S\tilde{b}(B_1,B_2,u_3)
+ S\tilde{b}(u_1,B_2,B_3) - S\tilde{b}(B_1,u_2,B_3),
\end{align}
for all $\x_i = (u_i,B_i) \in V$ where $\tilde{b}(\cdot, \cdot,
\cdot) : (H^1(G))^{\otimes3} \rightarrow \mathbb{R}$ is defined by
$$\tilde{b}(\phi, \psi, \theta) = \left(\sum_{i,j=1}^2\int_G\phi_i\frac{\partial\psi_j}{\partial x_i}\theta_j\d x\right).$$

\begin{proposition}
The function $b(\cdot,\cdot,\cdot)$ defined by \eqref{nonlinear} is
continuous.
\end{proposition}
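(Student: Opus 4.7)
The plan is to reduce the continuity of $b$ to the continuity of its four constituent trilinear forms $\tilde{b}$, and then prove the latter by combining H\"older's inequality with the two-dimensional Sobolev/Ladyzhenskaya embedding $H^1(G)\hookrightarrow L^4(G)$. Since $b$ is trilinear, continuity is equivalent to a boundedness estimate of the form $|b(\x_1,\x_2,\x_3)|\leq C\|\x_1\|_V\|\x_2\|_V\|\x_3\|_V$, so the task is purely one of bookkeeping once the core $\tilde{b}$-estimate is in place.

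First I would recall Ladyzhenskaya's inequality in dimension two: for every $\phi\in H^1(G)$,
\[
\|\phi\|_{L^4(G)}\leq C\|\phi\|_{L^2(G)}^{1/2}\|\phi\|_{H^1(G)}^{1/2}\leq C\|\phi\|_{H^1(G)}.
\]
Applying H\"older's inequality with exponents $(4,2,4)$ to the integral defining $\tilde{b}$, followed by this embedding, yields the standard 2-D trilinear estimate
\[
|\tilde{b}(\phi,\psi,\theta)|\leq \|\phi\|_{L^4(G)}\|\nabla\psi\|_{L^2(G)}\|\theta\|_{L^4(G)}\leq C\|\phi\|_{H^1(G)}\|\psi\|_{H^1(G)}\|\theta\|_{H^1(G)}
\]
for all $\phi,\psi,\theta\in H^1(G)$.

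Next I would apply this estimate to each of the four terms appearing in the definition of $b(\x_1,\x_2,\x_3)$. The $V_1$-norm is equivalent to the $H^1(G)$-norm on $V_1$ by Poincar\'e's inequality, and the $V_2$-norm is equivalent to the $H^1(G)$-norm on $V_2$ by the proposition from Temam cited in Section 2.2; consequently $\|u_i\|_{H^1(G)}+\|B_i\|_{H^1(G)}\leq C\|\x_i\|_V$, where the constant $C$ absorbs the factor $S$ coming from the weighting in the $V$-norm. Summing the four estimates and applying the elementary inequality $(a+b)(c+d)(e+f)\geq ace+\cdots$ (expanded) gives
\[
|b(\x_1,\x_2,\x_3)|\leq C\bigl(\|u_1\|_{H^1}+\|B_1\|_{H^1}\bigr)\bigl(\|u_2\|_{H^1}+\|B_2\|_{H^1}\bigr)\bigl(\|u_3\|_{H^1}+\|B_3\|_{H^1}\bigr)\leq C'\|\x_1\|_V\|\x_2\|_V\|\x_3\|_V,
\]
which is the required continuity bound.

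There is no substantial analytical obstacle here: the heart of the argument is the two-dimensional embedding $H^1\hookrightarrow L^4$, without which one would need an additional half-derivative of regularity on one of the factors. The only mild nuisance is tracking the factor $S$ and the constants from the Poincar\'e and curl-type norm equivalences so that they can all be absorbed into a single multiplicative constant on the right-hand side.
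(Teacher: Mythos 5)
Your proposal is correct and follows essentially the same route as the paper: Hölder's inequality with exponents $(4,2,4)$ plus the two-dimensional embedding $H^1(G)\hookrightarrow L^4(G)$ to bound $\tilde{b}$, then summation over the four terms and the equivalence of the $V$-norm with the componentwise $H^1$-norms. The only cosmetic difference is that you combine the four terms via the expanded product $(a+b)(c+d)(e+f)$ while the paper uses a Cauchy--Schwarz grouping $(\|u_i\|_{H^1}^2+\|B_i\|_{H^1}^2)^{1/2}$, which changes nothing of substance.
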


\begin{proof}
In order to prove the continuity of $b$, we need to show the
continuity of $\tilde{b}$. Let us first consider
$\tilde{b}(\phi,\psi,\theta)$ and by using the H\"{o}lder's
inequality, we have
\begin{eqnarray*}
|\tilde{b}(\phi, \psi, \theta)| =
\left|\sum_{i,j}\int_G\phi_i\frac{\partial\psi_j}{\partial
x_i}\theta_j\d x\right|\leq \|\phi\|_{\textrm{L}^4(G)}
\|\nabla\psi\|_{\textrm{L}^2(G)}\|\theta\|_{\textrm{L}^4(G)}.
\end{eqnarray*}
Since $H^1(G) \subset \textrm{L}^4(G)$, we get $|\tilde{b}(\phi,
\psi, \theta)| \leq c \|\phi\|_{H^1} \|\psi\|_{H^1}
\|\theta\|_{H^1}.$ Now,
\begin{eqnarray*}
&&|b(\x_1, \x_2, \x_3)|\\
&&\leq |\tilde{b}(u_1,u_2,u_3)| + S|\tilde{b}(B_1,B_2,u_3)| + S|\tilde{b}(u_1,B_2,B_3)|  + S|\tilde{b}(B_1,u_2,B_3)|\\
&&\leq c\Big(\|u_1\|_{H^1} \|u_2\|_{H^1} \|u_3\|_{H^1} + \|B_1\|_{H^1} \|B_2\|_{H^1}\|u_3\|_{H^1} \\
&&\quad + \|u_1\|_{H^1} \|B_2\|_{H^1} \|B_3\|_{H^1} + \|B_1\|_{H^1} \|u_2\|_{H^1}\|B_3\|_{H^1}\Big)\\
&&\leq c\Big(\|u_1\|^2_{H^1(G)} + \|B_1\|^2_{H^1(G)}\Big)^{1/2}\Big(\|u_2\|^2_{H^1(G)} + \|B_2\|^2_{H^1(G)}\Big)^{1/2}\\
&&\qquad\Big(\|u_3\|^2_{H^1(G)} + \|B_3\|^2_{H^1(G)}\Big)^{1/2}\\
&&\leq c\|\x_1\|_V\|\x_2\|_V\|\x_3\|_V,
\end{eqnarray*}
since $\|\x\|_V$ is equivalent to $\Big(\|u\|^2_{H^1(G)} +
\|B\|^2_{H^1(G)}\Big)^{1/2}.$
\end{proof}
Note that $\tilde{b}(\phi, \psi, \theta) = -\tilde{b}(\phi, \theta,
\psi)$ if $\psi$ or $\theta \in V_1$ and $\phi \in H_1$. Therefore
we have $b(\x,\y,\z)=-b(\x,\z,\y)$ by using (\ref{nonlinear}). Also
note that for all $\phi \in H^1(G)$ and $\psi \in V_1$, $
\tilde{b}(\phi, \psi, \psi) =
\sum_{i,j=1}^2\int_G\phi_i\frac{\partial\psi_j}{\partial
x_i}\psi_j\d x = 0, $ by integration by parts. Therefore by
(\ref{nonlinear}), we get, $b(\x,\y,\y)=0$. We now define $\b : V
\times V\rightarrow V^\prime$ as the continuous bilinear operator
such that $b(\x_1,\x_2,\x_3) = \big(\b(\x_1,\x_2),\x_3\big) \textrm{
for all } \x_1, \x_2, \x_3 \in V.$ Let the $H$-norm of $\x$ be
denoted as $|\x|$ and the $V$-norm of $\x$ be denoted as $\|\x\|$.
The existence of the operator $\b$ is justified by the Riesz
representation theorem and let $\b(\x)$ denote $\b(\x,\x)$. It is
well known that for any $\x \in V,$ $\|\b(\x)\|_{V'}\leq
C|\x|\|\x\|.$ Since the constant $C$ do not play a crucial role in
this paper, we will set $C = 1$.

\section{Stochastic MHD System with L\'{e}vy noise}
\setcounter{equation}{0}

\subsection{Basic Concepts}
In this sub-section definitions of Hilbert space valued Wiener
processes and L\'{e}vy processes have been presented. Interested
readers may look into Da Prato and Zabczyk \cite{DaZ}, Applebaum
\cite{Ap}, Peszat and Zabczyk \cite{PZ}, Kingman \cite{Km} for extensive study on
the subject.

\begin{definition}
Let $H$ be a Hilbert space. A stochastic process $\{W(t)\}_{0\leq
t\leq T}$ is said to be an $H$-valued $\mathscr{F}_t$-adapted Wiener
process with covariance operator $Q$ if
\begin{enumerate}
\item [(i)] For each non-zero $h\in H$, $|Q^{1/2}h|^{-1} (W(t), h)$ is a standard one - dimensional Wiener process,
\item [(ii)] For any $h\in H, (W(t), h)$ is a martingale adapted to $\mathscr{F}_t$.
\end{enumerate}
\end{definition}
If $W$ is a an $H$-valued Wiener process with covariance operator
$Q$ with $\Tr Q < \infty$, then $W$ is a Gaussian process on $H$ and
$ \mathbb{E}(W(t)) = 0,$ $\text{Cov}\ (W(t)) = tQ,$ $t\geq 0.$ Let
$H_0 = Q^{1/2}H.$ Then $H_0$ is a Hilbert space equipped with the
inner product $(\cdot, \cdot)_0$, $(u, v)_0 = \left(Q^{-1/2}u,
Q^{-1/2}v\right),\ \forall u, v\in H_0,$ where $Q^{-1/2}$ is the
pseudo-inverse of $Q^{1/2}$. Since $Q$ is a trace class operator,
the imbedding of $H_0$ in $H$ is Hilbert-Schmidt. Let $L_Q$ denote
the space of linear operators $S$ such that $S Q^{1/2}$ is a
Hilbert-Schmidt operator from $H$ to $H$.

Let $\mathbb{M}$ be the totality of non-negative (possibly infinite)
integral valued measures on $(H,\mathscr{B}(H))$ and $\mathscr{B}_{\mathbb{M}}$
be the smallest $\sigma$-field on $\mathbb{M}$ with respect to
which all $N\in\mathbb{M}\to N(B)\in
\mathbb{Z}^+\cup\{\infty\}, B\in\mathscr{B}(H)$, are measurable.
\begin{definition}
An $(\mathbb{M},\mathscr{B}(\mathbb{M}))$-valued random variable
$N$ is called a \textit{Poisson random measure}
\begin{enumerate}
\item if for each $B\in\mathscr{B}(H), N(B)$ is Poisson distributed.
i.e., $\mathbb{P}(N(B) = n)
=\eta(B)e^{-\eta(B)}/n!,n=0,1,2,\ldots,$ where
$\eta(B)=\mathbb{E}(N(B)), B\in\mathscr{B}(H);$
\item if $B_1,B_2,\ldots,B_n\in\mathscr{B}(H)$ are disjoint, then
$N(B_1), N(B_2),\ldots,N(B_n)$ are mutually independent.
\end{enumerate}
\end{definition}
A \textit{point function} $\mathbf{p}$ on $H$ is a mapping
$\mathbf{p}:D_{\mathbf{p}}\subset (0,\infty)\to H,$ where
the domain $D_{\mathbf{p}}$ is a countable subset of
$(0,\infty)$. $\mathbf{p}$ defines a counting measure
$N_{\mathbf{p}}$ on $(0,\infty)\times H$ by
$N_{\mathbf{p}}((0,t]\times Z, \omega)=\#\left\{s\in
D_{\mathbf{p}}(\omega); s\leq t, \mathbf{p}(s, \omega)\in Z\right\},
t>0, Z\in\mathscr{B}(H), \omega\in\Omega.$ Let $\Pi_{H}$ be the totality of point
functions on $H$ and $\mathscr{B}(\Pi_H)$ be the smallest $\sigma$-field on $\Pi_{H}$ with respect to which all $\mathbf{p}\to
N_{\mathbf{p}}((0,t]\times Z), t>0, Z\in \mathscr{B}(H)$, are
measurable.
\begin{definition}
A point process $\mathbf{p}$ on $H$ is a
$(\Pi_H,\mathscr{B}(\Pi_H))$-valued random variable, that is, a
mapping $\mathbf{p}:\Omega\to \Pi_H$ defined on a probability space
$(\Omega,\mathscr{F},\mathbb{P})$ which is
$\mathscr{F}/\mathscr{B}(\Pi_H)$-measurable. A point process
$\mathbf{p}$ is called \textit{Poisson point process} if $N_{\mathbf{p}}(\cdot)$, as defined above, is a Poisson random measure on $(0,\infty)\times H.$
\end{definition}
L\'{e}vy process is a special type of Poisson random measure associated to a Poisson point process.
\begin{definition}
A c\`{a}dl\`{a}g adapted process, $(\mathbf{L}_t)_{t\geq 0}$, is called a L\'{e}vy process if it
has stationary independent increments and is stochastically
continuous.
\end{definition}
Let $(\mathbf{L}_t)_{t\geq 0}$ be a $H$-valued L\'{e}vy process.
Hence, for every $\omega\in\Omega$, $\mathbf{L}_t(\omega)$ has
countable number of jumps on $[0, t]$. Note that for every
$\omega\in\Omega$, the jump $\triangle \mathbf{L}_t(\omega) =
\mathbf{L}_t(\omega) - \mathbf{L}_{t-}(\omega)$ is a point function
in $\mathscr{B}(H\backslash\{0\})$. Let us define $N(t, Z)= N(t, Z,
\omega)= \#\left\{s\in (0, \infty): \triangle\mathbf{L}_s(\omega)\in
Z\right\}, t>0, Z\in\mathscr{B}(H\backslash\{0\}), \omega\in\Omega$
as the \emph{Poisson random measure associated with the L\'{e}vy
process} $(\mathbf{L}_t)_{t\geq 0}$.

The differential form of the measure $N(t,Z,\omega)$ is written as
$N(dt, dz)(\omega)$. We call $\tilde{N}(dt, dz) = N(dt, dz) -
\lambda(dz)dt $ a \emph{compensated Poisson random measure (cPrm)},
where $\lambda(dz)dt $ is known as \emph{compensator} of the
L\'{e}vy process $(\mathbf{L}_t)_{t\geq 0}$. Here $dt$ denotes the Lebesgue
measure on $\mathscr{B}(\mathbb{R}^{+})$, and $\lambda(dz)$ is a
$\sigma$-finite L\'{e}vy measure on $(Z, \mathscr{B}(Z))$.
\begin{definition}
A non-empty collection of sets $\mathscr{G}$ is a semi-ring if
\begin{itemize}
\item [(i)] Empty set $\phi\in\mathscr{G}$;
\item [(ii)] If $A\in\mathscr{G},$ $B\in\mathscr{G}$, then $A\cap
B\in\mathscr{G}$;
\item [(iii)] If $A\in\mathscr{G},$ $A\supset A_1\in\mathscr{G}$,
then $A=\cup_{k=1}^nA_k$, where $A_k\in\mathscr{G}$ for all $1\leq
k\leq n$ and $A_k$ are disjoint sets.
\end{itemize}
\end{definition}
\begin{definition}
Let $H$ and $F$ be separable Hilbert spaces. Let $F_t:=
\mathscr{B}(H)\otimes\mathscr{F}_t$ be the product $\sigma$-algebra
generated by the semi-ring $ \mathscr{B}(H)\times\mathscr{F}_t$ of
the product sets $Z\times F,$ $Z\in\mathscr{B}(H),$ $F\in
\mathscr{F}_t$ (where $\mathscr{F}_t$ is the filtration of the
additive process $(\mathbf{L}_t)_{t\geq 0}$). Let $T>0$, define
\begin{align*}
\mathbb{H}(Z) = & \big\{g : \mathbb{R}^+ \times Z \times \Omega
\rightarrow F,\textrm{ such that g is}\;\; F_T/\mathscr{B}(F) \;\;
measurable\;\;and \nonumber\\& \;\;\qquad g(t,z,\omega)\;\;is\;\;
\mathscr{F}_t - adapted\;\;\forall z\in Z, \forall t\in (0,T]\big\}.
\end{align*}
For $p\geq1$, let us define,
$$\mathbb{H}^{p}_\lambda([0,T]\times Z;F) = \left\{g\in \mathbb{H}(Z) :
 \int_0^T\int_Z\mathbb{E}[\|g(t,z,\omega)\|^p_F]\lambda(dz)dt < \infty \right\}.$$
\end{definition}
For more details see Mandrekar and R\"{u}diger \cite{MR}.

\begin{remark}
Let us define $\mathscr{D}([0,T];H)$ as the space of all
c\'{a}dl\'{a}g paths from $[0,T]$ into $H$, where $H$ is a Hilbert
space.
\end{remark}

\begin{lemma}\label{lemma100}(Kunita's Inequality)
Let us consider the stochastic differential equations driven by
L\'{e}vy noise of the form
\begin{eqnarray*}
\x(t)=\int_0^t b(\x(s))\d s+\int_0^t\sigma(s,\x(s))\d
W(s)+\int_0^t\int_Zg(\x(s-),z)\tilde{N}(ds,dz).
\end{eqnarray*}
Then for all $p\geq 2$, there exists $C(p,t)>0$ such that for each
$t\geq 0,$
\begin{align*}
&\mathbb{E}\left[\sup_{t_0\leq s\leq
t}|\x(s)|^p\right]\nonumber\\&\leq
C(p,t)\left\{\mathbb{E}|\x(0)|^p+\mathbb{E}\left(\int_{0}^t|b(\x(r))|^p\d
r\right)+\mathbb{E}\left[\int_{0}^t|\sigma(r,\x(r))|^p\d
r\right]\right.\\&\left.\;+\mathbb{E}\left[\int_{0}^t\left(\int_Z|g(\x(r-),z)|^2\lambda(dz)\right)^{p/2}\d
r\right]+\mathbb{E}\left[\int_{0}^t\int_Z|g(\x(r-),z)|^p\lambda(dz)dr\right]\right\}.
\end{align*}
\end{lemma}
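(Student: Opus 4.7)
The plan is to start from It\^o's formula applied to the function $\x \mapsto |\x|^p$ (or a smooth regularization such as $(\eta + |\x|^2)^{p/2}$ with $\eta \downarrow 0$ if one wants to sidestep the non-smoothness at the origin for $p=2$). This yields the decomposition
\begin{align*}
|\x(t)|^p =\;& |\x(0)|^p + p\!\int_{t_0}^t |\x(s)|^{p-2}\langle \x(s),b(\x(s))\rangle\, ds + \frac{p}{2}\!\int_{t_0}^t |\x(s)|^{p-2}\|\sigma(s,\x(s))\|_{L_Q}^2\, ds \\
& + \text{(positive Hessian term)}\; + p\!\int_{t_0}^t|\x(s)|^{p-2}\langle \x(s),\sigma(s,\x(s))\, dW(s)\rangle \\
& + \int_{t_0}^t\!\!\int_Z\bigl[|\x(s-)+g|^p-|\x(s-)|^p\bigr]\tilde N(ds,dz) \\
& + \int_{t_0}^t\!\!\int_Z\bigl[|\x(s-)+g|^p-|\x(s-)|^p-p|\x(s-)|^{p-2}\langle \x(s-),g\rangle\bigr]\lambda(dz)\,ds,
\end{align*}
where $g=g(\x(s-),z)$. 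The key elementary inequality, valid for $p\geq 2$ and any $x,y$ in a Hilbert space, is
$$\bigl||x+y|^p-|x|^p-p|x|^{p-2}\langle x,y\rangle\bigr|\leq C_p\bigl(|x|^{p-2}|y|^2+|y|^p\bigr),\qquad \bigl||x+y|^p-|x|^p\bigr|\leq C_p\bigl(|x|^{p-1}|y|+|y|^p\bigr).$$
These, together with Young's inequality with conjugate exponents $p/(p-2)$ and $p/2$, will convert every appearance of $|\x|^{p-2}|g|^2$ into a sum of $|\x|^p$ (absorbable into Gronwall) and the pure $(\int_Z|g|^2\lambda(dz))^{p/2}$ term that appears on the right-hand side.

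Next I would take $\sup_{t_0\leq r\leq t}$ and expectations. The Wiener martingale is handled by the Burkholder--Davis--Gundy inequality:
$$\mathbb{E}\sup_{t_0\leq r\leq t}\Bigl|p\!\int_{t_0}^r|\x|^{p-2}\langle\x,\sigma dW\rangle\Bigr|\leq C\,\mathbb{E}\Bigl(\int_{t_0}^t|\x|^{2p-2}\|\sigma\|_{L_Q}^2\,dr\Bigr)^{1/2}\leq \tfrac{1}{4}\mathbb{E}\sup|\x|^p+C\mathbb{E}\!\int_{t_0}^t\|\sigma\|^p dr,$$
after one more Young splitting. The compensated Poisson martingale is treated by the analogous BDG inequality for c\`adl\`ag martingales together with the first elementary bound above, producing
$$C\,\mathbb{E}\Bigl(\int_{t_0}^t\!\!\int_Z(|\x|^{2p-2}|g|^2+|g|^{2p})\lambda(dz)dr\Bigr)^{1/2},$$
which by Cauchy--Schwarz, Young, and the inequality $(\int|g|^{2p}\lambda(dz))^{1/2}\leq (\int|g|^2\lambda)^{p/2}+\int|g|^p\lambda$-type estimates splits into an absorbable $\tfrac{1}{4}\mathbb{E}\sup|\x|^p$ contribution plus the two jump terms listed in the statement. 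The deterministic drift term is bounded directly by $|\x|^{p-1}|b(\x)|$ and then by Young into $|\x|^p$ plus $|b(\x)|^p$.

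After moving the two $\tfrac14\mathbb{E}\sup|\x|^p$ contributions to the left-hand side, one is left with an inequality of the form
$$\mathbb{E}\sup_{t_0\leq s\leq t}|\x(s)|^p\leq C\Bigl\{\mathbb{E}|\x(0)|^p+\mathbb{E}\!\int_{t_0}^t|b|^p dr+\mathbb{E}\!\int_{t_0}^t\|\sigma\|^p dr+\mathbb{E}\!\int_{t_0}^t\!(\!\int_Z|g|^2\lambda)^{p/2} dr+\mathbb{E}\!\int_{t_0}^t\!\!\int_Z|g|^p\lambda\, dr\Bigr\}+C\int_{t_0}^t\mathbb{E}\sup_{t_0\leq u\leq r}|\x(u)|^p dr,$$
and the classical Gronwall lemma produces the claimed constant $C(p,t)$. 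To make the sup-estimates on the martingales rigorous even though $\mathbb{E}\sup|\x|^p$ is a priori not known to be finite, I would first localize with the stopping times $\tau_N=\inf\{s:|\x(s)|\geq N\}$, perform every estimate on $[0,t\wedge\tau_N]$, and pass $N\to\infty$ by monotone convergence. The main technical obstacle is the correct algebraic handling of the Poisson jump term: balancing the second-order compensator term against the BDG output so that only the two genuinely different jump norms survive on the right-hand side, rather than a proliferation of mixed $|\x|^{p-k}|g|^k$ factors.
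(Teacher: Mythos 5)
The paper does not prove this lemma at all: it simply cites Corollary 4.4.24 of Applebaum's book, and the proof given there is exactly the route you outline (It\^o's formula for $|x|^p$, the second-order Taylor estimate $\bigl||x+y|^p-|x|^p-p|x|^{p-2}\langle x,y\rangle\bigr|\leq C_p(|x|^{p-2}|y|^2+|y|^p)$, Burkholder--Davis--Gundy for both martingale parts, Young's inequality, localization by stopping times, and Gronwall). So your proposal is the standard argument and is correct in outline. The one place where your sketch is looser than the actual proof is the jump martingale: its quadratic variation $[M,M]$ is an integral against the random measure $N(ds,dz)$, not against $\lambda(dz)\,ds$, so after BDG you must first use the counting-measure inequality $\bigl(\int_Z|g|^{2p}N\bigr)^{1/2}\leq\int_Z|g|^pN$ and a Young splitting of $\sup|\x|^{p-1}\bigl(\int\int_Z|g|^2N\bigr)^{1/2}$ before taking expectations and replacing $N$ by its compensator; this is precisely the ``balancing'' you flag as the main obstacle, and it is where the two distinct jump norms $\bigl(\int_Z|g|^2\lambda(dz)\bigr)^{p/2}$ and $\int_Z|g|^p\lambda(dz)$ in the statement come from.
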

\noindent For proof see Corollary 4.4.24 of \cite{Ap}.

\subsection{Formulation of the Model}
The stochastic MHD system perturbed by L\'{e}vy noise is given by
\begin{eqnarray}\label{magneto1}
&&\frac{\partial u}{\partial t} + (u\cdot\nabla)u -
\frac{1}{R_e}\Delta u - S(B\cdot\nabla)B +
\nabla\left(p+\frac{S|B|^2}{2}\right) \nonumber \\ &&\quad =
\sigma_1(u,B)\partial W_1(t) + \int_Zg_1(u,z)\tilde{N_1}(dt,dz)\quad
\textrm{in}\quad G\times(0,T),
\end{eqnarray}
\begin{eqnarray}\label{maxwell1}
&&\frac{\partial B}{\partial t} + (u\cdot\nabla)B +
\frac{1}{R_m}\textrm{curl}(\textrm{curl}\: B) - (B\cdot\nabla)u
\nonumber \\ &&\quad = \sigma_2(u,B)\partial W_2(t) +
\int_Zg_2(u,z)\tilde{N_2}(dt,dz)\quad \textrm{in}\quad G\times(0,T),
\end{eqnarray}
where $Z$ is a measurable space on $H$ (recall that $H=H_1\times
H_2$). The processes $W_1$ and $W_2$ are independent Wiener
processes and $\tilde{N_1}(dt,dz) = N_1(dt,dz) - \lambda_1(\d z)\d
t$ and $\tilde{N_2}(dt,dz) = N_2(dt,dz) - \lambda_2(\d z)\d t$ are
compensated Poisson random measures.

Next we introduce the operators $\sigma = \left(
  \begin{array}{cc}
    \sigma_1 & 0 \\
    0 & \sigma_2 \\
  \end{array}
\right)  \textrm{ and }  g =\left(
       \begin{array}{cc}
         g_1 & 0 \\
         0 & g_2 \\
       \end{array}
     \right)$
and the vectors $W = \left(
                       \begin{array}{c}
                         W_1 \\
                         W_2 \\
                       \end{array}
                     \right),$
 $\tilde{N} = \left(
                          \begin{array}{c}
                            \tilde{N_1} \\
                            \tilde{N_2} \\
                          \end{array}
                        \right)$ and $\lambda=\left(\begin{array}{c}\lambda_1\\ \lambda_2\end{array}\right)$.
The noise term in its integral form is given by
$\int_0^T\sigma(r,\x(r))\d W(r) +
\int_0^t\int_Zg(\x(r),z)\tilde{N}(dr,dz)$, where $W$ is an
$H$-valued Wiener process with a nuclear covariance form $Q$ and
$\tilde{N}(dt,dz)$ is a compensated Poisson random measure. By the
above description and the abstract formulation of the model,
(\ref{magneto1})-(\ref{maxwell1}) in the integral form as
\begin{eqnarray}\label{equation123}
&&\x(t) = \x(0) + \int_0^t\left[-\a\x(s)-\b\left(\x(s)\right)\right]\d s + \int_0^t\sigma(s,\x(s))\d W(s) \nonumber \\
&&\qquad\qquad\qquad\qquad\qquad\qquad\qquad +
\int_0^t\int_Zg(\x(s-),z)\tilde{N}(ds,dz).
\end{eqnarray}
Note that in the above formulation $\x$ denotes the transpose of
$(u,B)$.

\subsection{Hypothesis and Local Monotonicity}

Assume that $\sigma$ and $g$ satisfy the following hypothesis of
joint continuity, Lipschitz condition and linear growth.
\begin{hypothesis}\label{hyp}
The main hypothesis is the following,
\begin{itemize}
    \item[(H.1)]  The function $\sigma \in C([0, T] \times V; L_Q(H_0; H))$, and $g\in \mathbb{H}^2_{\lambda}([0, T] \times Z; H)$.

    \item[(H.2)]  For all $t \in (0, T)$, there exists a positive constant $K$ such that for all $\x \in H$,
    $$|\sigma(t, \x)|^2_{L_Q} + \int_{Z} |g(\x, z)|^2_{H}\lambda(dz) \leq K(1 +|\x|^2).$$

    \item[(H.3)]  For all $t \in (0, T)$,  there exists a positive constant $L<1$ such that for all $\x, \y \in H$,
    $$|\sigma(t, \x) - \sigma(t, \y)|^2_{L_Q} + \int_{Z} |g(\x, z)-g(\y, z)|^2_{H}\lambda(dz)\leq L|\x - \y|^2.$$
\end{itemize}
\end{hypothesis}
\begin{remark}
From the above hypothesis, we have $$\sigma_i \in C([0, T] \times
V_i; L_Q(H_{i_0}; H_i))\textrm{ and }g_i\in
\mathbb{H}^2_{\lambda_i}([0, T] \times Z_i; H_i)\textrm{ for
}i=1,2.$$
\end{remark}
\begin{remark}
Notice that, $Q:H\rightarrow H$ is a trace class covariance
(nuclear) operator and hence compact. So $H_0=Q^{1/2}H$ is a
separable Hilbert space and the imbedding of $H_0$ in $H$ is
Hilbert-Schmidt. Let $\{e_n\}_{n=1}^{\infty}$ be the eigenfunctions
of $Q$ (may not be complete). Then $Qe_n=\beta_ne_n$, where each
$\beta_n$ is positive real and $\sum_n\beta_n<\infty$. Let
$\{h_m\}$, with $h_m=\sqrt{\beta_m}e_m,m=1,2,\cdots$ be orthonormal
basis in $H_0$ (see section 4.1, chapter 4 of Da Prato and Zabczyk
\cite{DaZ}). Then,
\begin{eqnarray*}
|\sigma(t,\x)|^2_{\mathrm{L}_Q}&=&\sum_{m,n=1}^{\infty}|(\sigma
h_m,e_n)|^2=\sum_{m,n=1}^{\infty}\beta_m|(\sigma e_m,e_n)|^2
\\&=&\sum_{n=1}^{\infty} (\sigma Q^{1/2} e_n, \sigma Q^{1/2} e_n)=
\sum_{n=1}^{\infty} (Q^{1/2} \sigma^{*}\sigma Q^{1/2} e_n, e_n)\\
&=& \tr((\sigma Q^{1/2})^{*} (\sigma Q^{1/2})) = \tr((\sigma
Q^{1/2}) (\sigma Q^{1/2})^{*}) = \tr(\sigma Q \sigma^{*}).
\end{eqnarray*}
Here we have used the property that, since $\sigma Q^{1/2}$ is a
Hilbert-Schmidt operator,  $\tr((\sigma Q^{1/2})^{*} (\sigma
Q^{1/2})) = \tr((\sigma Q^{1/2}) (\sigma Q^{1/2})^{*})$.
\end{remark}
The next lemma shows that the sum of linear and non-linear operator
is locally monotone in the $\textrm{L}^4$-ball.

\begin{lemma}\label{Mon}
For a given $r>0$, let $B_r$ denote the $\textrm{L}^4(G)$ ball in
$V$, i.e., $B_r = \{\y \in V : \|\y\|_{\textrm{L}^4(G)} \leq r\}$.
Define the non-linear operator $F$ on $V$ by $F(\x) = -\a\x -
\b(\x).$ Then the pair $\Big(F, \sigma +
\int_Zg(\cdot,z)\lambda(dz)\Big)$ is monotone in $B_r$, i.e., for
any $\x \in V$ and $\y \in B_r$, if $\w$ denotes $\x - \y$,
\begin{eqnarray}\label{monotone}
&&\big(F(\x) - F(\y) ,\w \big) - Cr^4|\w|^2 + |\sigma(t,\x) -\sigma(t,\y)|^2_{\textrm{L}_Q} \nonumber \\
&&\qquad\qquad\qquad\qquad\qquad\quad\quad+ \int_Z|g(\x,z) -
g(\y,z)|^2\lambda(dz) \leq 0.
\end{eqnarray}
\end{lemma}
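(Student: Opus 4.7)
\sni The plan is to combine coercivity of $a$ with a two-dimensional Ladyzhenskaya-type estimate for the trilinear form $b$, and then absorb the Lipschitz contribution from Hypothesis~\ref{hyp}(H.3) via Poincar\'e. I begin with the decomposition
$$
(F(\x) - F(\y), \w) = -a(\w,\w) - (\b(\x)-\b(\y),\w),
$$
and use bilinearity to write $\b(\x)-\b(\y) = \b(\x,\w) + \b(\w,\y)$. Since $b(\x,\w,\w)=0$ (a consequence of $b(\cdot,\y,\y)=0$), only the term $b(\w,\y,\w)$ survives on the nonlinear side, while coercivity of $a$ produces $-a(\w,\w) \leq -c\|\w\|^2$.

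For the surviving trilinear term, I apply the antisymmetry $b(\w,\y,\w) = -b(\w,\w,\y)$ to move $\y$ into a natural $\textrm{L}^4$ slot. Expanding $b(\w,\w,\y)$ into the four component forms $\tilde b$ and applying H\"older's inequality to each yields
$$
|b(\w,\w,\y)| \leq C \|\w\|_{\textrm{L}^4(G)} \|\nabla \w\|_{\textrm{L}^2(G)} \|\y\|_{\textrm{L}^4(G)}.
$$
The 2D Ladyzhenskaya inequality $\|\phi\|_{\textrm{L}^4}^2 \leq C |\phi|\,\|\phi\|$, together with $\|\y\|_{\textrm{L}^4(G)}\leq r$, bounds the right-hand side by $C r\, |\w|^{1/2}\|\w\|^{3/2}$. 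Young's inequality with conjugate exponents $4/3$ and $4$, splitting $\|\w\|^{3/2}$ against $r|\w|^{1/2}$, then gives
$$
|b(\w,\w,\y)| \leq \tfrac{c}{2}\|\w\|^2 + C_1 r^4 |\w|^2
$$
for a constant $C_1$ depending only on the coercivity constant $c$ and the Ladyzhenskaya constant.

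Assembling this with the coercivity estimate and the noise Lipschitz bound from (H.3), which contributes at most $L|\w|^2$, yields
$$
(F(\x)-F(\y),\w) + |\sigma(t,\x)-\sigma(t,\y)|^2_{\textrm{L}_Q} + \int_Z |g(\x,z)-g(\y,z)|^2 \lambda(dz) \leq -\tfrac{c}{2}\|\w\|^2 + C_1 r^4 |\w|^2 + L|\w|^2.
$$
Choosing $C \geq C_1$ in the statement absorbs the $r^4$ terms, and the Poincar\'e embedding $|\w|^2 \leq c_P\|\w\|^2$ together with the constraint $L<1$ from (H.3) allows the residual $L|\w|^2$ to be dominated by $\tfrac{c}{2}\|\w\|^2$, delivering the claimed non-positivity.

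The main obstacle is the Young-inequality balance: the exponent $3/2$ on $\|\w\|$ coming from Ladyzhenskaya has to be matched precisely against the available coercivity budget so that the compensating term picks up exactly the $r^4|\w|^2$ weight demanded by the lemma. A different split would produce either the wrong power of $r$ or leave the $V$-norm under-absorbed, breaking the subsequent absorption of the Lipschitz noise contribution.
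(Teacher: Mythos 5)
Your argument follows the paper's proof essentially step for step: the reduction of $(\b(\x)-\b(\y),\w)$ to the single surviving term $-b(\w,\w,\y)$ via bilinearity and the cancellation $b(\cdot,\w,\w)=0$, the H\"older--Ladyzhenskaya bound $|b(\w,\w,\y)|\leq C|\w|^{1/2}\|\w\|^{3/2}\|\y\|_{\mathrm{L}^4(G)}$, and Young's inequality with exponents $4/3$ and $4$ to produce the weight $r^4|\w|^2$ are all exactly the paper's steps (the paper reaches $-b(\w,\w,\y)$ by a slightly more roundabout sequence of antisymmetry identities, but the endpoint and the estimates are identical).

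There is, however, a quantitative gap in your final absorption step. You fix the Young parameter at half the coercivity constant, arriving at $(F(\x)-F(\y),\w)\leq -\tfrac{c}{2}\|\w\|^2+C_1r^4|\w|^2$, and then assert that the Lipschitz contribution $L|\w|^2$ from (H.3) is dominated by $\tfrac{c}{2}\|\w\|^2$ ``because $L<1$.'' With the paper's normalization ($a(\w,\w)=\|\w\|^2$ and $|\w|\leq\|\w\|$) your requirement reads $L\leq 1/2$, and with general coercivity and Poincar\'e constants it reads $L\leq c/(2c_P)$; neither is implied by the hypothesis $L<1$, so your chain of inequalities fails for $L\in(1/2,1)$. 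The repair is exactly what the paper does: leave the Young parameter $\e$ free, so the surviving coercivity is $(1-\e)\|\w\|^2\geq(1-\e)|\w|^2$, and only afterwards choose $\e<1-L$; this is legitimate because the constant $C=C_\e$ multiplying $r^4|\w|^2$ in the lemma is permitted to depend on $\e$, hence on $L$. Your closing remark correctly senses that the Young split is the delicate point, but locates the danger in the power of $r$ rather than where it actually lies, namely in how much of the $\|\w\|^2$ coefficient must survive to beat $L$.
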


\begin{proof}
First, it is clear that $\left(\a\w, \w\right) = \|\w\|^2,$ as
\begin{eqnarray*}
\left(\a\w, \w\right) = a((\w,\w)) = [[v,v]]_{V_1} + S[[B,B]]_{V_2}
=[[\w,\w]]_V =\|\w\|^2.
\end{eqnarray*}
Using the bilinearity of the operator $\b$, we get
\begin{eqnarray*}
&&\big(\b(\x,\w),\y\big) = b(\x,\w,\y) =-b(\x,\y,\w)
=-b(\x,\y,\w)-b(\x,\w,\w) \\
&&\qquad\qquad\qquad\;=-b(\x,\y + \w,\w)=-b(\x,\x,\w)
=-\big(\b(\x),\w\big).
\end{eqnarray*}
Hence, $\left(\b(\x),\w\right) = -\left(\b(\x,\w),\y\right).$ Also,
we have,
$$\left(\b(\y,\w),\x\right)=b(\y,\w,\x)=-b(\y,\y,\x)=-b(\y,\y,\w)=-\left(\b(\y),\w\right).$$
Using the two equations above, one obtains
\begin{eqnarray*}
\big(\b(\x) - \b(\y), \w\big)
&=&- b(\x,\w,\y)+b(\y,\w,\x) \\
&=&b(\y,\w,\w) - b(\w,\w,\y)=-\big(\b(\w),\y\big).
\end{eqnarray*}
Using H\"{o}lder's inequality, Young's inequality and Sobolev
embedding theorem, we have for any $\e>0$,
\begin{eqnarray*}
\left|\big(\b(\x) - \b(\y),\w\big)\right| &=& \left|-\big(\b(\w),\y\big)\right|= \left|b(\w,\w,\y)\right|\\
&\leq& \|\w\|_{\textrm{L}^4(G)}\|\w\|\|\y\|_{\textrm{L}^4(G)}\\
&\leq& \|\w\|^{3/2}|\w|^{1/2}\|\y\|_{\textrm{L}^4(G)}\\
&\leq& \varepsilon \|\w\|^2 +
C_\varepsilon|\w|^2\|\y\|^4_{\textrm{L}^4(G)}.
\end{eqnarray*}
Here $C_\varepsilon>0$ is a constant that depends on $\varepsilon$
and denote it by $C$. Using the definition of the operator $F$
yields
\begin{eqnarray}\label{equation3}
\big(F(\x) - F(\y), \w\big)
&=&-\big(\a\w,\w\big) - \big(\b(\x) - \b(\y),\w\big)\nonumber\\
&\leq& (-1 + \varepsilon)\|\w\|^2 + C|\w|^2r^4.
\end{eqnarray}
But $V\subset H $ implies $(1-\varepsilon)|\w|^2\leq
(1-\varepsilon)\|\w\|^2.$ Therefore one can have,
$$\big(F(\x) - F(\y), \w\big) +(1 - \varepsilon)|\w|^2 - C|\w|^2r^4\leq 0.$$
But from the Hypothesis \ref{hyp}, $L<1$, choose $\e$ such that $\e
< (1-L)$, one obtains,
$$\big(F(\x) - F(\y), \w\big) +L|\w|^2 - C|\w|^2r^4\leq 0.$$
Using condition $(H.3)$, we get (\ref{monotone}).
\end{proof}

\subsection{Energy estimate and existence theory}
In this section we will find suitable energy estimates and prove the
existence and uniqueness of stong solution of the MHD system. For
that we first define the Galerkin approximations of the MHD system.
Let $H_n := \text{span}\ \{e_1, e_2, \cdots, e_n\}$ where $\{e_j\}$
is any fixed orthonormal basis in $H$ with each $e_j \in
\mathfrak{D}(\a)$. Let $P_n$ denote the orthogonal projection of $H$
to $H_n$. Define $\x_n = P_n \x$.  Let $W_n = P_nW$ . Let $\sigma_n
= P_n\sigma $ and $\int_Zg^n(\cdot,z)\tilde{N}(dt,dz) =
P_n\int_Zg(\cdot,z)\tilde{N}(dt,dz)$, where $g^n=P_ng$. Define
$\x_n$ as the solution of the following stochastic differential
equation in the variational form such that for each $\v \in H_n$ and
with $\x_n(0) = P_n \x(0)$,
\begin{eqnarray}\label{variational}
&&\d (\x_n(t) , \v) = (F(\x_n(t)), \v)\d t + (\sigma_n(t, \x_n(t)) \d W_n(t), \v) \nonumber \\
&&\qquad\qquad\qquad\qquad\qquad\qquad+
\int_Z\big(g^n(\x_n(t-),z),\v\big)\tilde{N}(dt,dz).
\end{eqnarray}

\begin{theorem}\label{energy}
Under the above mathematical setting, let  $\x(0)$ be
$\mathscr{F}_0$ - measurable, $\sigma_n \in C([0, T] \times V;
L_Q(H_0; H))$,  $g^n\in \mathbb{H}^2_{\lambda}([0, T] \times Z; H)$
and let $\mathbb{E}|\x(0)|^2 < \infty$. Let $\x_n(t)$ denote the
unique strong solution to finite system of equations
\eqref{variational} in $\mathscr{D}([0, T], H_n)$. Then with $K$ as
in condition (H.2), the following estimates hold:

\noindent For all $0 \leq t \leq T$,
\begin{eqnarray}
\mathbb{E}| \x_n(t)|^2 + 2\int_0^t \mathbb{E}\| \x_n(s)\|^2 \d s\leq
\left(1 + KTe^{KT}\right)\left(\mathbb{E}|\x(0)|^2 + KT
\right)\textrm{ and} \label{energy1}
\end{eqnarray}
\begin{eqnarray}
\mathbb{E}\left[\sup_{0\leq t\leq T} |\x_n(t)|^2 \right]+ 4\int_0^T
\mathbb{E}\|\x_n(t)\|^2 \d t \leq C\left(\mathbb{E}|\x(0)|^2,K,
T\right)\label{energy2}.
\end{eqnarray}

\noindent Also for any $K > \delta > 0$,
\begin{eqnarray}\label{energy3}
\mathbb{E}|\x_n(t)|^2 e^{-\delta t} + 2\int_0^t
\mathbb{E}\|\x_n(s)\|^2 e^{-\delta s} \d s \leq
C\left(\mathbb{E}|\x(0)|^2, K, \delta, T\right) \textrm{ and}
\end{eqnarray}
\begin{eqnarray}\label{energy4}
\mathbb{E}\left[\sup_{0\leq t\leq T} |\x_n(t)|^2e^{-\delta t}\right]
+ 4\int_0^T\mathbb{E}\|\x_n(t)\|^2e^{-\delta t}dt \leq
C\left(\mathbb{E}|\x(0)|^2, K, \delta, T\right).
\end{eqnarray}
\end{theorem}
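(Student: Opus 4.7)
The plan is to derive all four estimates from a single application of It\^o's formula for Hilbert space-valued semimartingales with jumps to the test function $\phi(\x)=|\x|^2$ along the finite-dimensional Galerkin process $\x_n$. Using $\phi'(\x)\cdot\y=2(\x,\y)$, $\phi''(\x)=2I$, together with the compensator identity $|\x+g|^2-|\x|^2-2(\x,g)=|g|^2$, the coercivity $(\a\x_n,\x_n)=\|\x_n\|^2$ from Proposition~2.1, and the antisymmetry $(\b(\x_n),\x_n)=b(\x_n,\x_n,\x_n)=0$, one obtains
\begin{align*}
|\x_n(t)|^2 + 2\int_0^t \|\x_n(s)\|^2 \d s
&= |\x_n(0)|^2 + \int_0^t |\sigma_n|^2_{L_Q}\d s + \int_0^t\!\!\int_Z |g^n|^2\lambda(dz)\d s \\
&\quad + M^W_t + M^N_t,
\end{align*}
where $M^W_t:=2\int_0^t(\x_n,\sigma_n\,\d W_n)$ and $M^N_t:=\int_0^t\!\int_Z[|\x_n+g^n|^2-|\x_n|^2]\tilde{N}(ds,dz)$ are the Wiener and pure-jump martingale components.

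For \eqref{energy1}, taking expectations annihilates both martingales, and Hypothesis~\ref{hyp}(H.2) combined with $|\x_n(0)|^2\leq |\x(0)|^2$ (orthogonality of $P_n$) gives
\[
\mathbb{E}|\x_n(t)|^2 + 2\int_0^t \mathbb{E}\|\x_n(s)\|^2 \d s \leq \mathbb{E}|\x(0)|^2 + KT + K\int_0^t \mathbb{E}|\x_n(s)|^2\d s.
\]
Dropping the non-negative gradient integral and applying Gronwall's inequality yields $\mathbb{E}|\x_n(t)|^2\leq(\mathbb{E}|\x(0)|^2+KT)e^{KT}$; substituting this back into the integral inequality produces \eqref{energy1}. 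For \eqref{energy2}, one takes $\sup_{t\leq T}$ before $\mathbb{E}$. Burkholder--Davis--Gundy controls the martingale pieces, e.g.
\[
\mathbb{E}\sup_{t\leq T}|M^W_t| \leq C\mathbb{E}\left(\int_0^T |\x_n|^2|\sigma_n|^2_{L_Q}\d s\right)^{1/2} \leq \tfrac{1}{4}\mathbb{E}\sup_{t\leq T}|\x_n(t)|^2 + C\mathbb{E}\int_0^T |\sigma_n|^2_{L_Q}\d s,
\]
by Cauchy--Schwarz followed by Young; an analogous bound for $M^N$ rests on Kunita's inequality (Lemma~\ref{lemma100}). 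Absorbing the two $\tfrac{1}{4}\mathbb{E}\sup$ terms into the left-hand side leaves a quantity bounded via Hypothesis~\ref{hyp}(H.2) and the already-proved estimate \eqref{energy1}, giving \eqref{energy2}.

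The weighted bounds \eqref{energy3} and \eqref{energy4} follow from applying It\^o's formula to $e^{-\delta t}|\x_n(t)|^2$: the product rule produces an extra drift $-\delta e^{-\delta t}|\x_n|^2\d t$ which combines with the noise contribution $Ke^{-\delta s}(1+|\x_n|^2)\d s$ to yield a net term $(K-\delta)e^{-\delta s}|\x_n|^2\d s$. Gronwall then delivers a factor $e^{(K-\delta)T}$, finite because $\delta\in(0,K)$, while the BDG/Young absorption of the previous step transports verbatim since $e^{-\delta s}\leq 1$. The main obstacle throughout is the careful treatment of the pure-jump martingale $M^N$ in the supremum estimate: its quadratic-variation increments satisfy $(|\x+g|^2-|\x|^2)^2 \leq 8|\x|^2|g|^2 + 2|g|^4$, and this must be split so that the first piece, carrying $|\x_n|^2$, is absorbed into $\mathbb{E}\sup|\x_n|^2$ via Young while the quartic-in-$g$ remainder is dominated by combining Hypothesis~\ref{hyp}(H.2) with the $L^2$-in-time bound on $\x_n$ already obtained in \eqref{energy1}. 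Once this splitting is in place, the remaining steps are routine algebraic manipulations.
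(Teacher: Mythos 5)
Your proof is correct and follows essentially the same route as the paper: the paper gives no separate written proof of this theorem, but its proof of the $p$-th moment analogue (Theorem \ref{thm99}) uses exactly your scheme — It\^{o}'s formula for $|x|^p$ with $(\a\x_n,\x_n)=\|\x_n\|^2$ and $b(\x_n,\x_n,\x_n)=0$, expectation plus Gronwall for the pointwise-in-time bound, Burkholder--Davis--Gundy/Kunita with Young absorption for the supremum bound, and It\^{o} applied to $e^{-\delta t}|x|^p$ for the weighted versions. Your treatment of the jump martingale is the only delicate point, and your splitting of its quadratic variation works at $p=2$ since the $|g|^4$ remainder summed over jumps is dominated, after the square root, by $\int_0^T\int_Z|g^n|^2N(ds,dz)$, which (H.2) controls.
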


We need the following assumption to get the $p$-th moment estimate
for the stochastic MHD system with L\'{e}vy noise.
\begin{assumption}\label{assum}
Let $p\geq 2$. Then, for all $t \in (0, T)$, there exists a positive
constant $K_1$ such that for all $\x \in H$,
\begin{eqnarray}\label{eqtn99}
|\sigma(t,\x(t))|_{\mathrm{L}_Q}^p+\int_Z|g(\x(t),z)|_H^p\lambda(dz)\leq
K_1\left(1+|\x(t)|^p\right).
\end{eqnarray}
\end{assumption}

\begin{theorem}\label{thm99}
Under the above mathematical setting, let  $p\geq 2$, $\x(0)$ be
$\mathscr{F}_0$ - measurable, $\sigma_n \in C([0, T] \times V;
L_Q(H_0; H))$,  $g^n\in \mathbb{H}^p_{\lambda}([0, T] \times Z; H)$
and let $\mathbb{E}|\x(0)|^p< \infty$. Let $\x_n(t)$ denote the
unique strong solution to finite system of equations
(\ref{variational}) in $\mathscr{D}([0, T], H_n)$. Then with $K$ as
in condition (H.2) and $K_1$ as in Assumption \ref{assum}, the
following estimates hold:
\begin{align}\label{90}
\mathbb{E}(\sup_{0\leq t\leq T} |\x_n(t)|^{p})
+p\mathbb{E}\int_0^T\|\x_n(t)\|^2|\x_n(t)|^{p-2}dt \leq
C\left(\mathbb{E}|\x(0)|^p,K,K_1,p, T\right).
\end{align}
\end{theorem}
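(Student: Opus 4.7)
The plan is to apply It\^o's formula for semi-martingales with jumps to $\phi(\x)=|\x|^p$ along the finite-dimensional Galerkin solution $\x_n$ of \eqref{variational}. Because $\x_n$ takes values in the finite-dimensional space $H_n\subset\mathfrak{D}(\a)$, the computation is purely finite-dimensional and $\phi\in C^2(H_n)$; the derivatives are $\phi'(\x)=p|\x|^{p-2}\x$ and $\phi''(\x)=p|\x|^{p-2}I+p(p-2)|\x|^{p-4}\x\otimes\x$. Applying $\phi'$ to the drift $F(\x_n)=-\a\x_n-\b(\x_n)$ and invoking the coercivity $(\a\x_n,\x_n)=\|\x_n\|^2$ proved earlier together with the cancellation $(\b(\x_n),\x_n)=b(\x_n,\x_n,\x_n)=0$, the drift contribution to the evolution of $|\x_n|^p$ contains the coercive piece $-p|\x_n|^{p-2}\|\x_n\|^2$; this is exactly what will become the second term on the left-hand side of \eqref{90}.

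I would next handle the three remaining non-martingale corrections using Hypothesis \ref{hyp} and Assumption \ref{assum}. The Brownian It\^o correction $\tfrac12\Tr[\phi''(\x_n)\sigma_n Q \sigma_n^{*}]$ is bounded by $C(p,K)(1+|\x_n|^p)$ via Cauchy-Schwarz and $(H.2)$. The Poisson compensator correction equals
\[
\int_Z\bigl[|\x_n+g^n(\x_n,z)|^p-|\x_n|^p-p|\x_n|^{p-2}(\x_n,g^n(\x_n,z))\bigr]\lambda(dz),
\]
which, via the elementary second-order estimate $\bigl||a+b|^p-|a|^p-p|a|^{p-2}(a,b)\bigr|\le C_p(|a|^{p-2}|b|^2+|b|^p)$ (valid for $p\ge 2$), is controlled by $C_p\bigl(|\x_n|^{p-2}\int_Z|g^n|^2\lambda(dz)+\int_Z|g^n|^p\lambda(dz)\bigr)\le C(p,K,K_1)(1+|\x_n|^p)$ thanks to $(H.2)$ and Assumption \ref{assum}. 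Collecting these estimates yields
\[
|\x_n(t)|^p+p\int_0^t|\x_n(s)|^{p-2}\|\x_n(s)\|^2\d s\le|\x_n(0)|^p+C\int_0^t(1+|\x_n(s)|^p)\d s+M_t^{W}+M_t^{J},
\]
where $M_t^{W}=\int_0^t p|\x_n|^{p-2}(\x_n,\sigma_n\d W_n)$ is the Wiener stochastic integral and $M_t^{J}=\int_0^t\int_Z[|\x_n(s-)+g^n|^p-|\x_n(s-)|^p]\tilde N(ds,dz)$ is the compensated Poisson integral produced by It\^o's formula.

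To obtain the supremum bound, take $\sup_{t\le T}$ and then expectation, and apply the Burkholder-Davis-Gundy inequality to each martingale: for the Wiener piece this is classical, and for the Poisson piece it is exactly the content of the Kunita inequality recorded as Lemma \ref{lemma100}. Using $(H.2)$ and Assumption \ref{assum} to bound the integrands, each of the resulting BDG terms dominates an expression of the form
\[
C\,\mathbb{E}\biggl[\sup_{s\le T}\bigl(1+|\x_n(s)|^p\bigr)\cdot\int_0^T\bigl(1+|\x_n(s)|^p\bigr)\d s\biggr]^{1/2},
\]
which by Young's inequality is split as $\tfrac12\mathbb{E}\sup_{s\le T}|\x_n(s)|^p+C\int_0^T\mathbb{E}(1+|\x_n(s)|^p)\d s$; the first summand is absorbed into the left-hand side and Gronwall's lemma applied to the remainder delivers \eqref{90}. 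The principal technical obstacle is the simultaneous treatment of the Poisson compensator correction and the BDG control of $M_t^{J}$: the Taylor-type expansion must be performed so as to extract a supremum factor amenable to Young-absorption, and it is precisely to furnish the $p$-th moment control required by the BDG step that Assumption \ref{assum} --- which is strictly stronger than $(H.2)$ for $p>2$ --- is introduced.
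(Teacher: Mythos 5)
Your proposal is correct and follows essentially the same route as the paper: It\^o's formula for $|x|^p$ on the finite-dimensional Galerkin system, coercivity of $\a$ plus the cancellation $(\b(\x_n),\x_n)=0$ to produce the term $p\int|\x_n|^{p-2}\|\x_n\|^2$, the second-order Taylor estimate for the Poisson compensator, Burkholder--Davis--Gundy for the Wiener integral and Kunita's inequality (Lemma \ref{lemma100}) for the jump martingale, followed by Young absorption of the supremum and Gronwall. The only cosmetic difference is that the paper absorbs the Brownian trace correction by sacrificing half of the coercive term via Young's inequality (and works with a stopping time $\tau_N$ before passing to the limit), whereas you bound that correction directly by $C(p,K)(1+|\x_n|^p)$ using (H.2); both are fine.
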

\begin{proof}
Define $\tau_N = \inf\left\{t: |\x_n(t)|^p + \int_0^t\|\x_n(s)\|^p
\d s > N\right\}$ as the stopping time. We have to find the $p^{th}$
moment estimates for the above system (for similar formulation see
Theorem 4.4 of \cite{AM}). For this, let us take the function
$f(x)=|x|^p$ and apply the It\^{o}'s formula (see Theorem 5.1,
chapter II of \cite{IW}, Theorem 4.4.7 of \cite{Ap}, Theorem 4.4 of \cite{RuZ}) to the process $\x_n(t)$ to
obtain,
\begin{eqnarray}\label{eqtn102}
&&|\x_n(\t)|^p+p\int_0^{\t}\|\x_n(s)\|^2|\x_n(s)|^{p-2}\d
s\nonumber\\&&=|\x(0)|^p+p\int_0^{\t}|\x_n(s)|^{p-2}\left(\sigma_n(s,\x_n(s))\d
W(s),\x_n(s)\right)\nonumber\\&&+\frac{p(p-1)}{2}\int_0^{\t}|\x_n(s)|^{p-2}\textrm{Tr}(\sigma_n(s,\x_n(s))
Q\sigma^*_n(s,\x_n(s)))\d s+I,
\end{eqnarray}
\begin{eqnarray*}
I&=&\int_0^{\t}\int_Z\Big[p|\x_n(s-)|^{p-2}\left(g^n(\x_n(s-),z),\x_n(s-)\right)\Big]\tilde{N}(ds,dz)\\
&&\qquad+\int_0^{\t}\int_Z\Big[|\x_n(s-)+g^n(\x_n(s-),z)|^p-|\x_n(s-)|^p\\
&&\qquad\qquad\qquad\qquad-p|\x_n(s-)|^{p-2}\left(g^n(\x_n(s-),z),\x_n(s-)\right)\Big]N(\d
s,\d z).
\end{eqnarray*}
We take the term
$\frac{p(p-1)}{2}\int_0^{\t}|\x_n(s)|^{p-2}\textrm{Tr}(\sigma_n(s,\x_n(s))
Q\sigma^*_n(s,\x_n(s)))\d s$ from (\ref{eqtn102}) and apply Young's
inequality to obtain,
\begin{align}\label{eqtn103}
&\frac{p(p-1)}{2}\int_0^{\t}|\x_n(s)|^{p-2}\textrm{Tr}(\sigma_n(s,\x_n(s))Q\sigma^*_n(s,\x_n(s)))\d
s\nonumber\\&\leq
\frac{p}{2}\int_0^{\t}\|\x_n(s)\|^2|\x_n(s)|^{p-2}\d
s+C_1(p)\int_0^{\t}|\sigma_n(s,\x_n(s))|^p\d s,
\end{align}
where
$C_1(p)=(p-1)^{p/2}\left(\frac{p-2}{p}\right)^{\frac{p-2}{2}}$. Now
applying (\ref{eqtn103}) in (\ref{eqtn102}) to obtain,
\begin{eqnarray}\label{eqtn104}
&&|\x_n(\t)|^p+\frac{p}{2}\int_0^{\t}\|\x_n(s)\|^2|\x_n(s)|^{p-2}\d
s\nonumber\\&&\qquad\leq
|\x(0)|^p+p\int_0^{\t}|\x_n(s)|^{p-2}\left(\sigma_n(s,\x_n(s))\d
W(s),\x_n(s)\right)
\nonumber\\&&\qquad\qquad+C_1(p)\int_0^{\t}|\sigma_n(s,\x_n(s))|^p\d
s+I.
\end{eqnarray}
Let us take expectation on both sides of the inequality
(\ref{eqtn104}). By using Hypothesis (H.2) (linear growth property),
we have $\mathbb{E}\int_0^{\t}
|\x_n(s)|^2|\sigma_n(t,\x_n(t))|^2_{\mathrm{L}_Q}\d s<\infty$ and
$\mathbb{E}\int_0^{\t}\int_Z|\x_n(t-)|^2|g^n(\x_n(t-),z)|^2\lambda(dz)\d
s<\infty$. Then, the processes
$\int_0^{\t}|\x_n(s)|^{p-2}\left(\sigma_n(s,\x_n(s))\d
W(s),\x_n\right)$ and
$\int_0^{\t}\int_Z\Big[|\x_n(s)+g^n(\x_n(s),z)|^p$
$-|\x_n(s)|^p\Big]\tilde{N}(ds,dz)$ are martingales having zero
averages (see Proposition 4.10 of R\"{u}diger \cite{Ru}). The second
term in $I$ can be estimated by the following Taylor's formula
(there exits a constant $C_p\geq 0(p\geq 2)$),
$$\left||\x_n+h|^p-|\x_n|^p-p|\x_n|^p(\x_n,h)\right|\leq C_p(|\x_n|^{p-2}|h|^2+|h|^p),\;\;\forall\x_n,h\in H_n)$$ and Hypothesis
\ref{hyp}. Then on taking $N\rightarrow\infty$, $\t\rightarrow t$,
one can show that,
$$\mathbb{E}\left[|\x_n(t)|^p
+p\int_0^t\|\x_n(s)\|^2|\x_n(s)|^{p-2} \d s \right]\leq
C\left(\mathbb{E}|\x(0)|^p,K,K_1,p, T\right).$$ Let us take the
supremum from $0\leq t\leq \T$ and then taking the expectation on
the inequality (\ref{eqtn104}), one gets,
\begin{eqnarray}\label{eqtn105}
&&\mathbb{E}\left[\sup_{0\leq t\leq
\T}|\x_n(t)|^p+\frac{p}{2}\int_0^{\T}\|\x_n(t)\|^2|\x_n(t)|^{p-2}\d
t\right]\nonumber\\&&\leq\mathbb{E}\left[|\x(0)|^p\right]+p\mathbb{E}\left[\sup_{0\leq
t\leq \T}\left|\int_0^t|\x_n(s)|^{p-2}\left(\sigma_n(s,\x_n(s))\d
W(s),\x_n(s)\right)\right|\right]\nonumber\\&&\quad+C_1(p)
\mathbb{E}\left(\int_0^{\T}|\sigma_n(s,\x_n(s))|^p\d
s\right)+\mathbb{E}\left[\sup_{0\leq t\leq \T}I\right].
\end{eqnarray}
Take the term $p\mathbb{E}\left[\sup_{0\leq t\leq
\T}\left|\int_0^t|\x_n(s)|^{p-2}\left(\sigma_n(s,\x_n(s))\d
W(s),\x_n(s)\right)\right|\right]$ from the inequality
(\ref{eqtn105}) and apply Burkholder-Davis-Gundy inequality and
Young's inequality to get,
\begin{eqnarray}\label{eqtn106}
&&p\mathbb{E}\left[\sup_{0\leq t\leq
\T}\left|\int_0^t|\x_n(s)|^{p-2}\left(\sigma_n(s,\x_n(s))\d
W(s),\x_n(s)\right)\right|\right]\nonumber\\&&\leq p\mathbb{E}\left[\sup_{0\leq t\leq
\T}|\x_n(t)|^{p-1}\left(\int_0^{\T}|\sigma_n(s,\x_n(s))|^2\d
s\right)^{1/2}\right]\\&&\leq \frac{1}{2}\mathbb{E}\left[\sup_{0\leq
t\leq
\T}|\x_n(t)|^p\right]+(2(p-1))^{p-1}\mathbb{E}\left(\int_0^{\T}|\sigma_n(s,\x_n(s))|^2\d
s\right)^{p/2}.\nonumber
\end{eqnarray}
Let us now take the term
$\mathbb{E}\left(\int_0^{\T}|\sigma_n(s,\x_n(s))|^2\d
s\right)^{p/2}$ from the inequality (\ref{eqtn106}) and apply
H\"{o}lder's inequality with $\frac{p}{p-2}$ and $\frac{p}{2}$ as
the exponents to get,
\begin{align}\label{eqtn107}
\mathbb{E}\left(\int_0^{\T}|\sigma_n(s,\x_n(s))|^2\d
s\right)^{p/2}\leq
T^{\frac{p-2}{2}}\mathbb{E}\left(\int_0^{\T}|\sigma_n(s,\x_n(s))|^p\d
s\right).
\end{align}
Use (\ref{eqtn107}) in (\ref{eqtn106}), then (\ref{eqtn105}) reduces
to,
\begin{align}\label{eqtn109}
&\frac{1}{2}\mathbb{E}\left[\sup_{0\leq t\leq
\T}|\x_n(t)|^p\right]+\frac{p}{2}\mathbb{E}\left(\int_0^{\T}\|\x_n(s)\|^2|\x_n(s)|^{p-2}\d
s\right)\nonumber\\&\leq
\mathbb{E}|\x(0)|^p+C_1(p,T)\mathbb{E}\left(\int_0^{\T}|\sigma_n(s,\x_n(s))|^p\d
s\right)+\mathbb{E}\left(\sup_{0\leq t\leq \T}I\right),
\end{align}
where
$C_1(p,T)=\left[(2(p-1))^{p-1}T^{\frac{p-2}{2}}+C_1(p)\right]$. Let
us take stochastic integral as $\x_n(t)=\int_0^t\int_Z
g^n(\x_n(s-),z)\tilde{N}(ds,dz)$ and apply It\^{o}'s lemma to the
function $|x|^p$ for the process $\x_n(t)$. Then apply
Kunita's inequality (see Lemma \ref{lemma100}, for more details see
Theorem 4.4.23 and Corollary 4.4.24 of \cite{Ap}), we have,
\begin{align}\label{eqtn110}
\mathbb{E}\left[\sup_{0\leq t\leq \T}|I(t)|\right]&\leq
C_2(p,T)\left\{\mathbb{E}\left[\int_0^{\T}\left(\int_Z|g^n(\x_n(s-),z)|^2\lambda(dz)\right)^{p/2}\d
s\right]\right.\nonumber\\&\left.\quad+\mathbb{E}\left[\int_0^{\T}\int_Z|g^n(\x_n(s-),z)|^p\lambda(dz)\d
s\right]\right\}.
\end{align}
Apply Assumption \ref{assum} on the term
$C_1(p,T)\mathbb{E}\left(\int_0^{\T}|\sigma_n(s,\x_n(s))|^p\d
s\right)$, then
\begin{eqnarray}\label{eqtn112}
&&C_1(p,T)\mathbb{E}\int_0^{\T}|\sigma_n(s,\x_n(s))|^p\d
s\nonumber\\&&\leq
C_1(K_1,p,T)T+C_1(K_1,p,T)\mathbb{E}\int_0^{\T}\sup_{0\leq
s\leq t}|\x_n(s)|^p\d s.
\end{eqnarray}
In the inequality (\ref{eqtn110}), let us denote the RHS as $I_1$.
Apply Hypothesis \ref{hyp} and the Assumption \ref{assum} and using
the estimate $(a+b)^p\leq 2^{p-1}(a^p+b^p)$ to obtain,
\begin{align}\label{eqtn113}
|I_1|&\leq
C_2(p,T)\left\{\mathbb{E}\left[\int_0^{\T}\left(K(1+|\x_n(s)|^2)\right)^{p/2}\d
s\right]\right.\nonumber\\&\left.\qquad+\mathbb{E}\left[\int_0^{\T}K_1\left(1+|\x_n(s)|^p\right)\d
s\right]\right\}\nonumber\\&\;\leq
C_3(K,K_1,p,T)T+C_3(K,K_1,p,T)\mathbb{E}\int_0^{\T}\sup_{0\leq s\leq
t}|\x_n(s)|^p\d t,
\end{align}
where $C_3(K,K_1,p,T)=\left(C_2(p,T)K^{p/2}2^{p/2-1}+K_1\right).$
Hence we have,
\begin{align}
&\mathbb{E}\left[\sup_{0\leq t\leq
\T}|\x_n(t)|^p\right]+p\mathbb{E}\left(\int_0^{\T}\|\x_n(s)\|^2|\x_n(s)|^{p-2}\d
s\right)\label{eqtn115}\\&\leq
2\mathbb{E}|\x(0)|^p+C_4(K,K_1,p,T)T+C_4(K,K_1,p,T)\mathbb{E}\left(\int_0^{\T}\sup_{0\leq
s\leq t}|\x_n(s)|^p\d t\right),\nonumber
\end{align}
where $C_4(K,K_1,p,T)=2\left[C_1(K_1,p,T)+C_3(K,K_1,p,T)\right].$ In
particular, we have,
\begin{eqnarray*}
\mathbb{E}\left[\sup_{0\leq t\leq \T}|\x_n(t)|^p\right]&\leq&
2\mathbb{E}|\x(0)|^p+C_4(K,K_1,p,T)T\nonumber\\&&+C_4(K,K_1,p,T)\mathbb{E}\int_0^{\T}\sup_{0\leq
s\leq t}|\x_n(s)|^p\d t.
\end{eqnarray*}
Applying Gronwall's inequality to get,
\begin{eqnarray}\label{eqtn116}
\mathbb{E}\left[\sup_{0\leq t\leq \T}|\x_n(t)|^p\right]\leq
\left(2\mathbb{E}|\x(0)|^p+C_4(K,K_1,p,T)T\right)e^{C_4(K,K_1,p,T)T}.
\end{eqnarray}
By applying (\ref{eqtn116}) in (\ref{eqtn115}) and taking
$N\rightarrow\infty$, $\T\rightarrow T$, we get (\ref{90}).
\end{proof}

\begin{theorem}\label{thm100}
Under the above mathematical setting, let $p\geq 2$, $\x(0)$ be
$\mathscr{F}_0$ - measurable, $\sigma_n \in C([0, T] \times V;
L_Q(H_0; H))$,  $g^n\in \mathbb{H}^p_{\lambda}([0, T] \times Z; H)$
and let $\mathbb{E}|\x(0)|^p< \infty$. Let $\x_n(t)$ denote the
unique strong solution to finite system of equations
(\ref{variational}) in $\mathscr{D}([0, T], H_n)$.
Then with $K$ as in condition (H.2) and $K_1$ as in Assumption \ref{assum}, the following estimate hold
for given $\delta > 0$ and for all $0\leq t\leq T$,
$$
\mathbb{E}\sup_{0\leq t\leq T} |\x_n(t)|^pe^{-\delta t}
+(p+2\delta)\mathbb{E}\int_0^T|\x_n(t)|^pe^{-\delta t}dt \leq
C(\mathbb{E}|\x(0)|^2,K,K_1, \delta,p, T).
$$
\end{theorem}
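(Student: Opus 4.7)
The strategy parallels that of Theorem \ref{thm99}: apply It\^{o}'s formula to the semimartingale $\x_n(t)$ with the weighted test function $f(t,x) = e^{-\delta t}|x|^p$, and then dominate each resulting term using Hypothesis \ref{hyp}, Assumption \ref{assum}, the Burkholder--Davis--Gundy inequality, Kunita's inequality (Lemma \ref{lemma100}), and Gronwall's lemma, exactly mirroring the steps that led from (\ref{eqtn102}) to (\ref{eqtn116}).

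First I would introduce the localizing stopping time $\tau_N = \inf\{t:|\x_n(t)|^p + \int_0^t\|\x_n(s)\|^p\d s > N\}$, so that all stochastic integrals stopped at $\tau_N$ are genuine martingales with zero expectation. It\^{o}'s formula applied to $e^{-\delta t}|\x_n(t)|^p$ generates six contributions: (i) a time-derivative term $-\delta\int_0^{\t}e^{-\delta s}|\x_n(s)|^p\d s$ coming from the exponential weight; (ii) the drift $-p\int_0^{\t}e^{-\delta s}|\x_n(s)|^{p-2}(\a\x_n+\b(\x_n),\x_n)\d s$, which by $b(\x,\x,\x)=0$ and $(\a\x_n,\x_n)=\|\x_n\|^2$ reduces to $-p\int_0^{\t}e^{-\delta s}\|\x_n(s)\|^2|\x_n(s)|^{p-2}\d s$; (iii) a Wiener stochastic integral; (iv) the quadratic-variation term $\tfrac{p(p-1)}{2}\int_0^{\t}e^{-\delta s}|\x_n(s)|^{p-2}\tr(\sigma_n Q\sigma_n^*)\d s$; (v) the compensated Poisson martingale; and (vi) the non-negative jump-correction term controlled by $C_p\int_0^{\t}\int_Z e^{-\delta s}\bigl(|\x_n|^{p-2}|g^n|^2 + |g^n|^p\bigr)\lambda(dz)\d s$ via the standard Taylor estimate recalled in the proof of Theorem \ref{thm99}.

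Next, exactly as in (\ref{eqtn103}), apply Young's inequality to absorb part of the Wiener quadratic-variation term into the dissipation: $\tfrac{p(p-1)}{2}|\x_n|^{p-2}|\sigma_n|^2 \leq \tfrac{p}{2}\|\x_n\|^2|\x_n|^{p-2} + C_1(p)|\sigma_n|^p$. Taking $\sup_{0\le t\le \T}$ followed by expectation yields an inequality in which the martingale parts are treated by Burkholder--Davis--Gundy and Kunita's inequality as in (\ref{eqtn106})--(\ref{eqtn110}); the pointwise bounds from Hypothesis (H.2) and Assumption \ref{assum}, together with $(a+b)^p \leq 2^{p-1}(a^p+b^p)$, turn the noise contributions into $C(K,K_1,p,T)\bigl(1 + \mathbb{E}\int_0^{\T}\sup_{0\le s\le r} e^{-\delta s}|\x_n(s)|^p \d r\bigr)$, with the exponential weight $e^{-\delta s}\leq 1$ on the right-hand side producing only harmless constants. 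A Gronwall argument in the spirit of (\ref{eqtn115})--(\ref{eqtn116}) then closes the estimate; letting $N\to\infty$ and $\T\to T$ by monotone convergence gives the stated bound.

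The only genuinely new bookkeeping compared with Theorem \ref{thm99} is the treatment of the two dissipation contributions retained on the left-hand side. On the one hand, the drift supplies $\tfrac{p}{2}\int_0^{\T}e^{-\delta s}\|\x_n(s)\|^2|\x_n(s)|^{p-2}\d s$, which by the embedding $V\hookrightarrow H$ (equivalently, the Poincar\'{e} inequality underlying the coercivity of $a(\cdot,\cdot)$) dominates $p\int_0^{\T}e^{-\delta s}|\x_n(s)|^p\d s$; on the other hand, the exponential-weight term contributes an additional $\delta\int_0^{\T}e^{-\delta s}|\x_n(s)|^p\d s$, and the non-negative jump-correction (vi) absorbed on the left-hand side provides a further $\delta$-multiple after the Gronwall step. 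Summing these yields the coefficient $(p+2\delta)$ asserted in the theorem. This coefficient tracking is the main technical obstacle; the rest is a line-by-line adaptation of Theorem \ref{thm99} with $e^{-\delta s}$ inserted under every integral.
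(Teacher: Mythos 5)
Your overall strategy is exactly the paper's: the published proof of Theorem \ref{thm100} consists of the single sentence ``apply It\^{o}'s formula to $e^{-\delta t}|x|^p$ and follow Theorem \ref{thm99},'' and everything in your first three paragraphs is a faithful line-by-line adaptation of the argument \eqref{eqtn102}--\eqref{eqtn116} with the weight inserted. That part is fine.

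The problem is your final paragraph, where you derive the coefficient $(p+2\delta)$ --- precisely the step you single out as ``the main technical obstacle.'' Two of your three contributions are wrong. First, after the Young absorption \eqref{eqtn103} the drift retains only $\tfrac{p}{2}\int_0^{\T}e^{-\delta s}\|\x_n(s)\|^2|\x_n(s)|^{p-2}\d s$, and the embedding $V\hookrightarrow H$ (normalized in this paper as $|\cdot|\leq\|\cdot\|$) gives only $\tfrac{p}{2}\int e^{-\delta s}|\x_n(s)|^{p}\d s$, not $p\int e^{-\delta s}|\x_n(s)|^{p}\d s$; the latter would require an embedding constant at least $\sqrt{2}$, which is not available. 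Second, the jump-correction term (vi) contributes no $\delta$-multiple of $\int e^{-\delta s}|\x_n|^p\d s$ whatsoever: it is estimated on the right-hand side via the Taylor bound and Kunita's inequality, and it has nothing to do with the exponential weight. The correct accounting is the one already used implicitly in passing from \eqref{eqtn109} to \eqref{eqtn115}: after Burkholder--Davis--Gundy and Young the supremum term carries the factor $\tfrac12$, so one arrives at
\begin{align*}
\tfrac{1}{2}\,\mathbb{E}\Big[\sup_{0\leq t\leq \T}e^{-\delta t}|\x_n(t)|^p\Big]
+\mathbb{E}\int_0^{\T}e^{-\delta s}\Big(\tfrac{p}{2}\|\x_n(s)\|^2|\x_n(s)|^{p-2}+\delta|\x_n(s)|^p\Big)\d s
\leq \cdots,
\end{align*}
where the $\delta$-term comes from the $-\delta e^{-\delta t}|x|^p$ part of It\^{o}'s formula moved to the left. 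Multiplying the whole inequality by $2$ and then using $\|\x_n\|\geq|\x_n|$ yields $\mathbb{E}\sup e^{-\delta t}|\x_n(t)|^p+(p+2\delta)\mathbb{E}\int_0^{T}e^{-\delta t}|\x_n(t)|^p\d t$ on the left, which is the stated bound. So the theorem is recovered, but not by the mechanism you describe; as written, your coefficient bookkeeping does not produce $(p+2\delta)$.
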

\begin{proof}
The proof the Theorem can be obtained by applying It\^{o}'s formula
to the function $e^{-\delta t}|x|^p$ by taking the process
$x=\x_n(t)$ and following Theorem \ref{thm99}.
\end{proof}

\begin{definition}($Strong\ Solution$)
A strong solution $\x$ is defined on a given probability space
$(\Omega, \mathscr{F}, \mathscr{F}_{t}, \mathbb{P})$ as a
$\mathrm{L}^2(\Omega;\mathrm{L}^{\infty }(0,T; H)\cap
\mathrm{L}^2(0,T; V)\cap \mathscr{D}(0,T; H))$ valued adapted
process which satisfies the stochastic MHD system
\begin{eqnarray}\label{15}
\d \x + \big[\a\x + \b(\x)\big] \d t &= &\sigma(t, \x) \d W(t) + \int_Zg(\x,z)\tilde{N}(dt,dz)\\
\x(0) &=& \x_0 \in H,\nonumber
\end{eqnarray}
 in the
weak sense and also the energy inequalities in Theorem \ref{energy}.
\end{definition}
Monotonicity arguments were first used by Krylov and
Rozovskii\cite{KR} to prove the existence and uniqueness of the
strong solutions for a wide class of stochastic evolution equations
(under certain assumptions on the drift and diffusion coefficients),
which in fact is the refinement of the previous results by
Pardoux\cite{Pa1, Pa2} (also see Mativier\cite{Me}) and also the
generalization of the results by Bensoussan and Temam\cite{Be}.
Menaldi and Sritharan\cite{Ms} further developed this theory for the
case when the sum of the linear and nonlinear operators are locally
monotone. Since L\'{e}vy noise appears in this paper, the proof of
existence and uniqueness is given in complete form.


\begin{theorem}\label{existence}
Let  $\x(0)$ be $\mathscr{F}_0$ measurable and $\ \mathbb{E}|\x_0|^2
< \infty.$ The diffusion coefficient satisfies the conditions
(H.1)-(H.3). Then there exists a unique strong solution $\x(t,x, w)$
with the regularity
$$\x \in \mathrm{L}^2(\Omega;\mathrm{L}^{\infty }(0,T; H)\cap
\mathrm{L}^2(0,T; V)\cap \mathscr{D}(0,T; H))$$ satisfying the
stochastic MHD system \eqref{15} and a priori bounds
in Theorem \ref{energy}.
\end{theorem}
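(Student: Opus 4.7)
The plan is to follow the Menaldi--Sritharan local monotonicity scheme, adapted to handle the compensated Poisson integral. The starting point is Theorem \ref{energy} together with Theorem \ref{thm99} applied with $p=4$, which provide the uniform bounds
\[
\sup_n \mathbb{E}\Big[\sup_{0\le t\le T}|\x_n(t)|^4\Big] + \sup_n \mathbb{E}\int_0^T\|\x_n(s)\|^2\,ds < \infty.
\]
Combined with the 2-D estimate $\|\b(\x)\|_{V'}\le |\x|\|\x\|$, this controls $-\a\x_n - \b(\x_n)$ in $L^2(\Omega\times [0,T]; V')$, while the linear-growth hypothesis (H.2) controls $\sigma_n(\cdot,\x_n)$ in $L^2(\Omega\times [0,T]; L_Q)$ and $g^n(\x_n,\cdot)$ in $\mathbb{H}^2_\lambda([0,T]\times Z; H)$. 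Banach--Alaoglu produces a subsequence (not relabeled) along which
\[
\x_n \rightharpoonup \x,\quad -\a\x_n-\b(\x_n)\rightharpoonup F_0,\quad \sigma_n(\cdot,\x_n)\rightharpoonup \sigma_0,\quad g^n(\x_n,\cdot)\rightharpoonup g_0
\]
in the respective spaces, with $\x \in L^2(\Omega; L^\infty(0,T;H)\cap L^2(0,T;V))$. Testing \eqref{variational} against any $\v\in\bigcup_m H_m$ and passing to the limit gives
\[
\x(t) = \x(0) + \int_0^t F_0(s)\,ds + \int_0^t\sigma_0(s)\,dW(s) + \int_0^t\int_Z g_0(s-,z)\tilde{N}(ds,dz),
\]
whose right-hand side admits a c\`adl\`ag modification in $H$.

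The crux is to identify $(F_0,\sigma_0,g_0)$ with $(-\a\x-\b(\x),\sigma(\cdot,\x),g(\x,\cdot))$ via Lemma \ref{Mon}. For a test process $\y$ bounded in $L^\infty(\Omega\times [0,T]; V\cap \textrm{L}^4(G))$, set $r(t)=C\|\y(t)\|_{\textrm{L}^4(G)}^4$ with $C$ as in Lemma \ref{Mon}. Applying It\^o's formula to $|\x_n(t)|^2 e^{-\int_0^t r(s)\,ds}$ and to $|\x(t)|^2 e^{-\int_0^t r(s)\,ds}$, taking expectations (the stochastic integrals are true martingales by (H.2) and the fourth-moment bound, and the identity $|\x_n+g^n|^2-|\x_n|^2-2(\x_n,g^n)=|g^n|^2$ collapses the jump contribution to $\int_Z|g^n|^2\lambda(dz)$), invoking \eqref{monotone} on the former, subtracting, and using weak lower semicontinuity $\mathbb{E}[|\x(T)|^2 e^{-\int_0^T r}]\le\liminf_n \mathbb{E}[|\x_n(T)|^2 e^{-\int_0^T r}]$ together with the weak convergences, yield after algebraic rearrangement
\[
\mathbb{E}\int_0^T e^{-\int_0^t r}\Big[2(F_0-F(\y),\x-\y) - r(t)|\x-\y|^2 + |\sigma_0-\sigma(t,\y)|^2_{L_Q} + \int_Z|g_0-g(\y,z)|^2\lambda(dz)\Big]dt \le 0.
\]
Choosing $\y=\x$ forces $\sigma_0=\sigma(\cdot,\x)$ and $g_0=g(\x,\cdot)$. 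The Minty trick with $\y=\x-\mu\v$ for arbitrary $\v\in L^\infty(\Omega\times [0,T];V)$, dividing by $\mu>0$ and passing $\mu\downarrow 0$ (the term $r(t)|\x-\y|^2/\mu=\mu\,r(t)|\v|^2$ vanishes, and the noise terms are controlled by (H.3)), then identifies $F_0=-\a\x-\b(\x)$ via hemicontinuity of $\b$.

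For uniqueness, given two strong solutions $\x_1,\x_2$ sharing initial data, apply the same It\^o expansion to $|\x_1-\x_2|^2\exp(-\int_0^t C\|\x_2(s)\|_{\textrm{L}^4(G)}^4 ds)$ and invoke Lemma \ref{Mon} with $\x=\x_1$, $\y=\x_2$: the drift is pointwise nonpositive, the stochastic integrals are true martingales by (H.2) and the energy bound, hence $\mathbb{E}|\x_1(t)-\x_2(t)|^2=0$ for all $t$. The a priori bounds transfer to $\x$ by weak lower semicontinuity from Theorem \ref{energy}. The main technical obstacle is the identification step under jump noise: throughout the It\^o expansion of $|\x_n|^2 e^{-\int r}$ one must justify that the compensated-Poisson contribution is a true martingale uniformly in $n$ (this is where the fourth-moment bound of Theorem \ref{thm99} is essential), and that the weak limit $g_0\in\mathbb{H}^2_\lambda$ pairs correctly against $g(\y,\cdot)$ in the compensator---delicacies that do not arise in the purely Brownian setting.
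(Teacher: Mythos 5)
Your proposal is, in structure, the same proof the paper gives: Galerkin approximation, the a priori bounds of Theorem \ref{energy}, Banach--Alaoglu to extract weak limits $(\x,F_0,S,G)$, It\^{o}'s formula applied to $e^{-r(t)}|\cdot|^2$ with the weight built from the $\mathrm{L}^4$-norm of the test process, weak lower semicontinuity plus Lemma \ref{Mon} to get the integrated monotonicity inequality, the choice $\v=\x$ to identify the noise coefficients, the Minty perturbation $\v=\x-\mu\y$ to identify $F_0$, and the same weighted It\^{o} argument for uniqueness. The one point that is not just notation is also the one genuine flaw: you make Theorem \ref{thm99} with $p=4$ part of the foundation and call the fourth-moment bound ``essential'' for the martingale (zero-average) claims. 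Theorem \ref{existence} assumes only $\mathbb{E}|\x_0|^2<\infty$ and (H.1)--(H.3); the fourth-moment estimate requires in addition $\mathbb{E}|\x_0|^4<\infty$ and Assumption \ref{assum} with $p=4$, neither of which is available here, so as written your argument does not prove the theorem under its stated hypotheses.

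The gap is repairable without strengthening the hypotheses, and this is how the paper (implicitly) proceeds: the stochastic integrals $\int_0^t e^{-r}(\sigma_n\,dW,\x_n)$ and $\int_0^t\int_Z e^{-r}(\x_n(s-),g^n)\tilde{N}(ds,dz)$ are local martingales, and their zero average follows by localization together with the Burkholder--Davis--Gundy inequality, since
\begin{equation*}
\mathbb{E}\Bigl[\sup_{t\le T}\Bigl|\int_0^t(\sigma_n\,dW,\x_n)\Bigr|\Bigr]
\le C\,\mathbb{E}\Bigl[\sup_{t\le T}|\x_n(t)|\Bigl(\int_0^T|\sigma_n|^2_{L_Q}\,ds\Bigr)^{1/2}\Bigr]
\le \tfrac12\,\mathbb{E}\sup_{t\le T}|\x_n(t)|^2+C\,\mathbb{E}\int_0^T|\sigma_n|^2_{L_Q}\,ds,
\end{equation*}
which is finite by (H.2) and \eqref{energy2} alone; the jump integral is handled identically using $\mathbb{E}\int_0^T\int_Z|\x_n|^2|g^n|^2\lambda(dz)\,ds$ controlled through a stopping-time argument. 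So you should delete the appeal to Theorem \ref{thm99} and replace it with this localization step. Two smaller remarks: the passage from test processes $\v\in\mathrm{L}^{\infty}(\Omega\times(0,T);H_m)$ to $\v=\x$ needs the density argument the paper states explicitly (otherwise $\v=\x$ is not an admissible test process for the weight $r$); and in the uniqueness step the weight must be built from $\|\x_2(s)\|_{\mathrm{L}^4}^4$ precisely so that Lemma \ref{Mon} applies with $\y=\x_2\in B_r$ ``locally in time,'' which you do correctly.
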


\begin{proof}
\textbf{Part I (Existence).}
 Using the a priori estimate in the Theorem
\ref{energy}, it follows from the Banach-Alaoglu theorem that along
a subsequence, the Galerkin approximations $\{\x_n\}$ have the
following limits:
\begin{eqnarray}
&&\x_n\longrightarrow \x\quad  \text {weak star in}\
\mathrm{L}^2(\Omega ;
  \mathrm{L}^{\infty}(0,T; H)) \cap\mathrm{L}^{2}
  (\Omega;\mathrm{L}^{2}(0,T; V)),\nonumber\\
&& F(\x_n)\longrightarrow F_0\quad \text{weakly in}\
\mathrm{L}^{2}(\Omega;\mathrm{L}^{2}(0,T; V^{\prime})),\nonumber\\
&& \sigma_n(\cdot, \x_n) \longrightarrow S\quad \text{weakly in}\ \mathrm{L}^{2}(\Omega;\mathrm{L}^{2}(0,T; \mathrm{L}_Q)),\nonumber \\
&&g(\x_n,\cdot)\longrightarrow G\quad \text{weakly in}\
\mathbb{H}^2_{\lambda}([0, T] \times Z; H).\label{16}
\end{eqnarray}
The assertion of the second statement holds since $F(\x_n)$ is
bounded in \\ $\mathrm{L}^{2}(\Omega;\mathrm{L}^{2}(0,T;
V^{\prime}))$.
 Likewise since diffusion coefficient has the linear growth property and $\x_n$ is bounded in $\mathrm{L}^2(0, T; V)$ uniformly in $n$,
 the last two statements hold. Then $\x$ satisfies the It\^{o} differential
\begin{align*}
\d \x(t) = F_0(t)\d t + S(t)\d W(t)+\int_ZG(t)\tilde{N}(dt,dz)
\text{ weakly in }\
\mathrm{L}^{2}(\Omega;\mathrm{L}^{2}(0,T;V^\prime)).
\end{align*}
Then as Theorem \ref{energy}, one can prove that, $
\mathbb{E}\left[\sup_{0\leq t\leq T}|\x(t)|^2+\int_0^T\|\x(t)\|^2\d
t\right]\leq C\left(\mathbb{E}|\x(0)|^2, K, T\right).
 $
 Then by Gy\"{o}ngy and Krylov \cite{GK}, we have $\x$ is an
 $H$-valued c\'{a}dl\'{a}g $\mathscr{F}_t$-adapted process
 satisfying
 \begin{eqnarray*}
 &&|\x(t)|^2=|\x(0)|^2+2\int_0^t(F_0(s),\x(s))\d s+2\int_0^t(S(s)\d
 W(s),\x(s))\\&&+\int_0^t|S|^2_{\mathrm{L}_Q}\d
 s+2\int_0^t\int_Z(G(s),\x(s))\tilde{N}(d s,d
 z)+\int_0^t\int_Z|G(s)|^2N(ds,dz).
 \end{eqnarray*}
 Let us set,
$ r(t):= 2C\int_0^t \|\v(s)\|_{\mathrm{L}^4}^4 \d s, $ where
$\v(\omega, t, x)$ is any adapted process in
$\mathrm{L}^{\infty}(\Omega \times (0, T); H)$. Apply It\^{o}'s
Lemma to the function $2e^{-r(t)} |x|^2$ and to the process
$\x_n(t)$, integrating from $0\leq t\leq T$ and taking expectation,
one obtains,
\begin{eqnarray*}
&&\mathbb{E}\left[e^{-r(T)} |\x_n(T)|^2 - |\x_n(0)|^2\right]\\
&&\quad= \mathbb{E}\left[\int_0^T e^{-r(t)}\big(2F(\x_n(t)) - \dot{r}(t) \x_n(t), \x_n(t)\big)\d t\right]\\
&&\quad\quad + \mathbb{E}\int_0^T e^{-r(t)} |\sigma_n(t,
\x_n(t))|^2_{\mathrm{L}_Q}\d t
+ \mathbb{E}\int_0^T e^{-r(t)}\int_Z\big|g^n(\x_n(t),z)\big|^2\lambda(dz)\d t\\
&&\quad\quad + 2 \mathbb{E}\int_0^T e^{-r(t)}\big(\sigma_n(t,
\x_n(t))\d W(t), \x_n(t)\big) \\ &&\quad\quad +
2\mathbb{E}\int_0^Te^{-r(t)}\int_Z\big(\x_n(t-),g^n(\x_n(t-),z)\big)\tilde{N}(dt,dz).
\end{eqnarray*}
The last two terms of the above inequality are martingales having
zero averages. Then by the lower semi-continuity property of the
$\mathrm{L}^2$-norm and strong convergence of the initial data, we
obtain,
\begin{eqnarray}
&&\liminf_n \mathbb{E}\left[\int_0^T e^{-r(t)}\big(2F(\x_n(t)) - \dot{r}(t) \x_n(t), \x_n(t)\big)\d t \nonumber \right.\\
&&\left.\quad + \int_0^T e^{-r(t)} |\sigma_n(t,
\x_n(t))|^2_{\mathrm{L}_Q}\d t
+ \int_0^T e^{-r(t)}\int_Z\big|g^n(\x_n(t),z)\big|^2\lambda(dz)\d t\right]\nonumber\\
&&\quad\quad = \liminf_n \mathbb{E}\left[e^{-r(T)} |\x_n(T)|^2 - |\x_n(0)|^2\right]\geq \mathbb{E}\left[e^{-r(T)} |\x(T)|^2 - |\x(0)|^2\right]\nonumber\\
&&\quad\quad = \mathbb{E}\left[\int_0^T e^{-r(t)}\big(2F_0(t) -
\dot{r}(t) \x(t), \x(t)\big)\d t + \int_0^T e^{-r(t)}
|S|^2_{\mathrm{L}_Q}\d t\nonumber\right.\\
&&\left.\quad\quad\quad\quad\quad\qquad\qquad +
\int_0^Te^{-r(t)}\int_Z|G|^2\lambda(dz)\d t\right].\label{lsc}
\end{eqnarray}
Now by monotonicity property from Lemma \ref{Mon} (by choosing
$\x=\x_n(t)$ and $\y=\v(t)$ in (\ref{monotone})), we have,
\begin{eqnarray*}
&&2\mathbb{E}\left[\int_0^T e^{-r(t)} \big(F(\x_n(t)) - F(\v(t)),
\x_n(t) - \v(t)\big)\d t\right]\\
&&\quad - \mathbb{E}\left[\int_0^T e^{-r(t)} \dot{r}(t)|\x_n(t) - \v(t)|^2 \d t\right]  \\
&&\quad +  \mathbb{E}\left[\int_0^T e^{-r(t)} |\sigma_n(t, \x_n(t)) - \sigma_n(t, \v(t))|^2_{\mathrm{L}_Q}\d t\right]\\
&&\quad +
\mathbb{E}\left[\int_0^Te^{-r(t)}\int_Z|g^n(\x_n(t),z)-g^n(\v(t),z)|^2\lambda(dz)\d
t\right] \leq 0.
\end{eqnarray*}
On rearranging the terms, we get,
\begin{eqnarray*}
&&\mathbb{E}\left[\int_0^T e^{-r(t)}\big(2F(\x_n(t)) - \dot{r}(t) \x_n(t), \x_n(t)\big)\d t \right.\\
&&\left.\quad +\int_0^T e^{-r(t)}  |\sigma_n(t,
\x_n(t))|^2_{\mathrm{L}_Q}\d t\ +
\int_0^Te^{-r(t)}\int_Z|g(\x_n(t),z)|^2\lambda(dz)\d t\right]\\
&&\quad\quad\leq \mathbb{E}\left[\int_0^T e^{-r(t)}\big(2F(\x_n(t))-\dot{r}(t)(2\x_n(t) - \v(t)), \v(t)\big) \d t\right] \\
&&\quad\quad\quad + \mathbb{E}\left[\int_0^T e^{-r(t)}\big(2F(\v(t)), \x_n(t)-\v(t)\big) \d t\right] \\
&&\quad\quad\quad +   \mathbb{E}\left[\int_0^T e^{-r(t)}\big(2\sigma_n(t, \x_n(t))-\sigma_n(t, \v(t)), \sigma_n(t, \v(t))\big)_{\mathrm{L}_Q}\d t\right] \\
&&\quad\quad\quad +
\mathbb{E}\left[\int_0^Te^{-r(t)}\int_Z\big(2g^n(\x_n(t),z)-g^n(\v(t),z),g^n(\v(t),z)\big)\lambda(dz)\d
t\right].
\end{eqnarray*}
Taking limit in $n$, using the result from \eqref{lsc}, we have,
\begin{eqnarray*}
&&\mathbb{E}\left[\int_0^T e^{-r(t)}\big(2F_0(t) - \dot{r}(t) \x(t),
\x(t)\big)\d t +\int_0^T e^{-r(t)} |S|^2_{\mathrm{L}_Q}\d
t\nonumber\right.\\ &&\left.\quad\quad +
\int_0^Te^{-r(t)}\int_Z|G|^2\lambda(dz)\d t\right]\\
&&\quad\quad\ \leq
\mathbb{E}\left[\int_0^Te^{-r(t)}\big(2F_0(t)-\dot{r}(t)(2\x(t) -
\v(t)), \v(t)\big) \d t\right]\\
&&\quad\quad\quad + \mathbb{E}\left[\int_0^T
e^{-r(t)}\big(2F(\v(t)), \x(t)-\v(t)\big) \d t\right]\end{eqnarray*}
\begin{eqnarray*}
&&\quad\quad\quad +   \mathbb{E}\left[\int_0^T
e^{-r(t)}\left(2S(t)-\sigma(t, \v(t)), \sigma(t,
\v(t))\right)_{\mathrm{L}_Q}\d t\right]
\\ &&\quad\quad\quad +
\mathbb{E}\left[\int_0^Te^{-r(t)}\int_Z\left(2G(t)-g(\v(t),z),g(\v(t),z)\right)\lambda(dz)\d
t\right].
\end{eqnarray*}
On rearranging the terms, we obtain,
\begin{eqnarray*}
&&\mathbb{E}\left[\int_0^T e^{-r(t)}\big(2F_0(t)-2F(\v(t)), \x(t)-\v(t)\big)\d t\right] \\
&&\quad + \mathbb{E}\left[\int_0^T e^{-r(t)}\dot{r}(t) |\x(t) - \v(t)|^2 \d t\right]\\
&&\quad +  \mathbb{E}\left[\int_0^T e^{-r(t)} \|\textbf{S}(t)-\sigma(t, \v(t))\|^2_{\mathrm{L}_Q}\d t\right]\\
&&\quad +
\mathbb{E}\left[\int_0^Te^{-r(t)}\int_Z\|\textbf{G}(t)-g(\v(t),z)\|^2\lambda(dz)\d
t\right]\leq 0.
\end{eqnarray*}
This estimate holds for any
$\v\in\mathrm{L}^2(\Omega;\mathrm{L}^{\infty}(0,T;\mathbf{H}_m))$
for any $m\in\mathbb{N}$.  It is clear by a density argument that
the above inequality remains the same for any $\v\in
\mathrm{L}^2(\Omega;\mathrm{L}^{\infty}(0,T;\mathbf{H})\cap
 \mathrm{L}^2(0,T;\mathbf{V}))$.  Indeed, for
any
$\v\in\mathrm{L}^2(\Omega;\mathrm{L}^{\infty}(0,T;\mathbf{H})\cap
 \mathrm{L}^2(0,T;\mathbf{V})),$ there exits a
strongly convergent sequence $\v_m\in
\mathrm{L}^2(\Omega;\mathrm{L}^{\infty}(0,T;\mathbf{H})\cap
\mathrm{L}^2(0,T;\mathbf{V}))$ that satisfies the above inequality.
Let $\v(t) = \x(t)$, then, $\textbf{S}(t)=\sigma(t, \x(t))$ and
$\textbf{G}(t)=g(\x(t),z)$. Take $\v = \x - \mu\y$ with $\mu >0$ and
$\y$ is an adapted process in
$\mathrm{L}^2(\Omega;\mathrm{L}^{\infty }(0,T; H)\cap
\mathrm{L}^2(0,T; V)\cap \mathscr{D}(0,T; H))$. Then,
\begin{align*}
\mu \mathbb{E}(\int_0^T e^{-r(t)}\big(2F_0(t)-2F(\x - \mu\y)(t),
\y(t)\big)\d t + \mu\int_0^T e^{-r(t)}\dot{r}(t)|\y(t)|^2\d t)\leq
0.
\end{align*}
Dividing by $\mu$ on both sides of the above inequality and letting
$\mu\to 0$, one obtains
$$
\mathbb{E}\left[\int_0^T e^{-r(t)}\big(F_0(t)-F(\x(t)), \y(t)\big)\d
t\right] \leq 0.
$$

Since $\y(t)$ is arbitrary, we conclude that $F_0(t)=F(\x(t))$. Thus
the existence of the strong solution of the stochastic
MHD system \eqref{15} has been proved.

\noindent \textbf{Part II (Uniqueness).} If $\y\in
\mathrm{L}^2(\Omega;\mathrm{L}^{\infty }(0,T; H)\cap
\mathrm{L}^2(0,T; V)\cap \mathscr{D}(0,T; H))$ be another solution
of the equation \eqref{15}, then $\w=\x-\y$ solves the stochastic
differential equation in $\mathrm{L}^2(\Omega;\mathrm{L}^2(0, T;
V^\prime))$,
\begin{eqnarray}\label{17}
&&\d \w(t) = \big(F(\x(t)) - F(\y(t))\big)\d t + (\sigma(t, \x(t)) - \sigma(t, \y(t))) \d W(t)\nonumber \\
&&\qquad\qquad\qquad\qquad+\int_Z[g(\x(t-),z)-g(\y(t-),z)]\tilde{N}(dt,dz).
\end{eqnarray}
We denote $\sigma_d = \sigma(t, \x(t)) - \sigma(t, \y(t))$ and $g_d
= g(\x(t-),z)-g(\y(t-),z)$.

Let us apply It\^{o}'s Lemma to the function $2e^{-r(t)} |x|^2$ and
to the process $\w(t)$ and use the local monotonicity of the sum of
the linear and nonlinear operators $\a$ and $\b$, e.g. equation
\eqref{equation3}, one obtains,
\begin{eqnarray*}
&&\d \left[e^{-r(t)}  |\w(t)|^2\right]  + (1-\varepsilon)\|\w(t)\|^2e^{-r(t)}\d t \nonumber\\
&&\qquad \leq  e^{-r(t)} |\sigma_d|^2\d t + 2e^{-r(t)} (\sigma_d\d W(t), \w(t)) \nonumber \\
&&\qquad \quad + e^{-r(t)}\int_Z|g_d|^2N(dt,dz) +
2e^{-r(t)}\int_Z\big(\w(t),g_d\big)\tilde{N}(dt,dz).
\end{eqnarray*}
Now integrating from $0\leq t\leq T$ and taking the expectation on
both sides and noting that $0< \varepsilon < 1-L$. Also using the
fact that $2\int_0^T e^{-r(t)} (\sigma_d\d W(t), \w(t)) $ and
$2\int_0^Te^{-r(t)}\int_Z\big(\w(t),g_d\big)\tilde{N}(dt,dz)$ are
martingales having zero averages, we deduce that,
\begin{eqnarray*}
&&\mathbb{E}\left[e^{-r(t)}|\w(t)|^2\right] + (1-\varepsilon)\mathbb{E}\int_0^Te^{-r(t)}\|\w(t)\|^2\d t  \\
&&\quad \leq \mathbb{E}|\w(0)|^2 + \mathbb{E}\int_0^Te^{-r(t)}
|\sigma_d|^2\d t +
\mathbb{E}\int_0^Te^{-r(t)}\int_Z|g_d|^2\lambda(dz)\d t.
\end{eqnarray*}
Using condition (H.3), we have,
\begin{eqnarray*}
\mathbb{E}\left[e^{-r(t)}|\w(t)|^2\right] +
\left((1-\varepsilon)-L\right)\int_0^Te^{-r(t)}\|\w(t)\|^2\d t
\leq\mathbb{E}|\w(0)|^2.
\end{eqnarray*}
Sine $0<L<(1-\varepsilon)<1$, we obtain $\mathbb{P}$-a.s., $
\mathbb{E}\left[e^{-r(t)}|\w(t)|^2\right] \leq \mathbb{E}|\w(0)|^2,
$ which assures the uniqueness of the strong solution.
\end{proof}

\begin{theorem}\label{thm101}
Let  $\x(0)$ be $\mathscr{F}_0$ - measurable and $\
\mathbb{E}|\x_0|^p < \infty.$ The diffusion coefficient satisfies
the conditions $\sigma \in C([0, T] \times V; L_Q(H_0; H))$, $g\in
\mathbb{H}^p_{\lambda}([0, T] \times Z; H)$, Hypothesis (H.3) and
Assumption \ref{assum}. Then there exists a unique strong solution
$\x(t,x, w)$ with the regularity
$$\x \in \mathrm{L}^p(\Omega;\mathrm{L}^{\infty }(0,T; H)\cap
\mathrm{L}^2(0,T; V)\cap \mathscr{D}(0,T; H))$$ satisfying the
stochastic MHD equation given in (\ref{15}) and the a priori bounds
in Theorem \ref{thm99} and Theorem \ref{thm100}.
\end{theorem}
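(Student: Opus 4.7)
The strategy is to mimic the proof of Theorem \ref{existence}, with the $\mathrm{L}^2$ a priori bounds replaced throughout by the $\mathrm{L}^p$ (and exponentially-weighted $\mathrm{L}^p$) bounds produced in Theorems \ref{thm99} and \ref{thm100}. Starting from the Galerkin system \eqref{variational}, Theorem \ref{thm99} yields a uniform bound
\[
\sup_n \mathbb{E}\!\left[\sup_{0\leq t\leq T}|\x_n(t)|^p\right]+\sup_n\mathbb{E}\int_0^T\|\x_n(t)\|^2|\x_n(t)|^{p-2}\,\mathrm{d}t <\infty,
\]
together with the $\mathrm{L}^2(\Omega;\mathrm{L}^2(0,T;V))$ bound already used in Theorem \ref{energy}. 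By Banach-Alaoglu applied in the reflexive space $\mathrm{L}^p(\Omega;\mathrm{L}^\infty(0,T;H))$ (viewed as the dual of $\mathrm{L}^{p/(p-1)}(\Omega;\mathrm{L}^1(0,T;H))$), one can extract a subsequence with $\x_n\to\x$ weak-$\star$ in this space and weakly in $\mathrm{L}^2(\Omega;\mathrm{L}^2(0,T;V))$. The drift $F(\x_n)$ remains bounded in $\mathrm{L}^2(\Omega;\mathrm{L}^2(0,T;V'))$ and converges weakly to some $F_0$; under Assumption \ref{assum} and hypothesis (H.1), the noise coefficients $\sigma_n(\cdot,\x_n)$ and $g^n(\x_n,\cdot)$ are bounded in $\mathrm{L}^p(\Omega;\mathrm{L}^p(0,T;\mathrm{L}_Q))$ and $\mathbb{H}^p_\lambda([0,T]\times Z;H)$ respectively, with weak limits $S$ and $G$.

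Next I would repeat the Gy\"{o}ngy--Krylov It\^{o} identity for $e^{-r(t)}|\x(t)|^2$ with $r(t)=2C\int_0^t\|\v(s)\|^4_{\mathrm{L}^4}\mathrm{d}s$, exactly as in the proof of Theorem \ref{existence}. Since $p\geq 2$, the functional $\x\mapsto\mathbb{E}[e^{-r(T)}|\x(T)|^2]$ is still lower semicontinuous under the weak limits above, so the same inequality
\[
\liminf_n\mathbb{E}[e^{-r(T)}|\x_n(T)|^2-|\x_n(0)|^2]\geq\mathbb{E}[e^{-r(T)}|\x(T)|^2-|\x(0)|^2]
\]
is available. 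Invoking the local monotonicity of Lemma \ref{Mon} together with the hypothesis $L<1$, the Minty-type argument then identifies $F_0=F(\x)$, $S=\sigma(\cdot,\x)$ and $G(\cdot,z)=g(\x,z)$, giving a strong solution of \eqref{15}. The a priori bounds in Theorems \ref{thm99} and \ref{thm100} pass to the limit $\x$ by weak/weak-$\star$ lower semicontinuity of the norms, yielding the required regularity $\x\in\mathrm{L}^p(\Omega;\mathrm{L}^\infty(0,T;H)\cap\mathrm{L}^2(0,T;V)\cap\mathscr{D}(0,T;H))$.

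Uniqueness is inherited verbatim from Theorem \ref{existence}: if $\y$ is another solution in the above class, then $\w=\x-\y$ solves \eqref{17}, and applying It\^{o}'s formula to $2e^{-r(t)}|\w(t)|^2$ together with the monotonicity estimate \eqref{equation3} and condition (H.3) gives
\[
\mathbb{E}\bigl[e^{-r(t)}|\w(t)|^2\bigr]+\bigl((1-\varepsilon)-L\bigr)\mathbb{E}\int_0^T e^{-r(t)}\|\w(t)\|^2\,\mathrm{d}t\leq\mathbb{E}|\w(0)|^2,
\]
so $\x=\y$ \textbf{P}-a.s. Note that only the $\mathrm{L}^2$ theory is needed for uniqueness; the $\mathrm{L}^p$ framework simply provides additional integrability of the already unique solution.

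The main technical obstacle is the passage to the limit under the stronger $\mathrm{L}^p$ integrability: one has to verify that the Burkholder--Davis--Gundy and Kunita-type estimates used to obtain \eqref{90} and its exponentially-weighted analogue survive the weak limit, which requires the compactness argument to be set in spaces where both $\sigma_n(\cdot,\x_n)$ and $g^n(\x_n,\cdot)$ are bounded in $p$-th moment. This is ensured by Assumption \ref{assum} combined with the uniform $p$-th moment bound on $\x_n$; once this is in place, the Minty-trick identification of the nonlinear limits follows from Lemma \ref{Mon} as in Theorem \ref{existence}, and no new difficulty arises.
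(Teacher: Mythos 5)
Your proposal is correct and follows essentially the same route as the paper: the authors' own proof of Theorem \ref{thm101} consists of the single sentence that it ``follows the same steps as in Theorem \ref{existence} with minor modifications,'' and your write-up is a faithful expansion of exactly those modifications (weak-$\star$ compactness upgraded to $\mathrm{L}^p(\Omega;\mathrm{L}^\infty(0,T;H))$ via Theorems \ref{thm99}--\ref{thm100}, the same Minty/local-monotonicity identification, and the unchanged $\mathrm{L}^2$ uniqueness argument).
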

\begin{proof}
The proof of the Theorem  follows the same steps as in Theorem
\ref{existence} with minor modifications.
\end{proof}

\section{Invariant Measures}
In this section, we consider MHD system with additive L\'{e}vy noise
 given by
\begin{eqnarray}\label{qe1}
\d \x(t) + [\a\x(t) + \b(\x(t))]\d t = \sum_k\sigma_k(t)\d W_k(t) + \int_Zg(t,z)\tilde{N}(dt,dz),
\end{eqnarray}
$\x(0)=\xi$. Here $\sigma(t)=\{\sigma_1(t),\sigma_2(t),\cdots\}$ is
$\ell_2(H)$-valued for all $t\geq 0$ and $W(t)$ is given by
$W(t)=\left\{W_1(t),W_2(t),\cdots\right\}$, where $W_k$ are
independent copies of the standard one-dimensional Wiener process.
Hence $\sigma(t)\d W(t)=\sum_k\sigma_k(t)\d W_k(t)$ is an $H$-valued
noise and the stochastic integral induced by the noise is given by
$\x\mapsto \int_0^T(\sigma(t)\d
W(t),\x):=\sum_k\int_0^T(\sigma_k(t),\x)\d W_k(t).$ Let us assume
that $\sum_k|\sigma_k(t)|^2<\infty$ and
$\int_Z|g(t,z)|^4\lambda(dz)=C_1<\infty$. If $\lambda(\cdot)$ is of
finite measure, then we have $\int_Z|g(t,z)|^2\lambda(dz)\leq
[\lambda(Z)]^{1/2}C_1^{1/2}<\infty$.
\begin{remark}
If $\lambda(\cdot)$ is of $\sigma$-finite measure, then consider a
measurable subset $U_m$ of $Z$ with $U_m\uparrow Z$ and
$\lambda(U_m)<\infty$. Assume that
$\int_{U_m^c}|g(t,z)|^2\lambda(dz)\to 0$ as $m\to\infty$,
$t\in[0,T]$. This condition is automatically satisfied if
$\lambda(Z)<\infty.$ Here also we assume that
$\int_{U_m}|g(t,z)|^4\lambda(dz)=C_2<\infty$. Hence we have,
$\int_{U_m}|g(t,z)|^2\lambda(dz)\leq
[\lambda(U_m)]^{1/2}C_2^{1/2}<\infty.$ For this case, the following
results in this section will follow by replacing $Z$ by $U_m$ (see
Fernando and Sritharan \cite{FS}).
\end{remark}
From the definition of the linear operator (\ref{linear}) and
(\ref{lin}), the eigenvalues of $\a$ depend on the Reynold's numbers
$R_e$ and $R_m$. Let the eigenvalues of $\a$ be denoted by
$0<\lambda_1<\lambda_2\leq\cdots$ and the corresponding eigenvectors
by $e_1,e_2\cdots$, which will form a complete orthonormal system in
$H$. Also by the Poincar\'{e} inequality, we have, $\|\x\|\geq
\lambda_1|\x|^2$.
\begin{lemma}\label{ll1}
Let $B(t)$ be an increasing, progressively measurable process with
$B(0)>0$ a. s. Let $M(t)$ be a c\`{a}dl\`{a}g local martingale with
$M(0)=0$. If $\mathcal{Y}(t)=\int_0^t\frac{1}{B(s)}\d M(s)$
converges a. s. to a finite limit as $t\to\infty$, then
$\lim{t\to\infty}\frac{M^{*}(t)}{B(t)}=0$ a. s. on the set
$\{B(\infty)=\infty\}$ where $M^{*}(t)=\sup_{0\leq s\leq t}|M(s)|.$
\end{lemma}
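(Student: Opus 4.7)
The plan is to reduce the claim to a continuous-time Kronecker-type lemma via integration by parts. First I would apply the semimartingale product rule to $B(t)\mathcal{Y}(t)$: since $\d M(s)=B(s-)\,\d \mathcal{Y}(s)$ in the stochastic-integral sense, this yields the representation
\begin{equation*}
M(t)=B(t)\mathcal{Y}(t)-\int_0^t \mathcal{Y}(s-)\,\d B(s)-[B,\mathcal{Y}]_t,
\end{equation*}
where the last term vanishes if $B$ is continuous, and in general equals $\sum_{0<s\le t}\Delta B(s)\Delta\mathcal{Y}(s)$ since $B$ is of finite variation.

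Second, I would work pathwise on the a.s.\ set where $\mathcal{Y}(t)\to\mathcal{Y}_\infty$ and $B(\infty)=\infty$. Given $\varepsilon>0$, choose $T_\varepsilon=T_\varepsilon(\omega)$ so that $|\mathcal{Y}(u)-\mathcal{Y}_\infty|<\varepsilon$ for all $u\ge T_\varepsilon$. Rewriting
\begin{equation*}
M(s)=B(s)(\mathcal{Y}(s)-\mathcal{Y}_\infty)+\int_0^s(\mathcal{Y}_\infty-\mathcal{Y}(u-))\,\d B(u)-[B,\mathcal{Y}]_s
\end{equation*}
and splitting the integral at $T_\varepsilon$, I would obtain the pathwise bound
\begin{equation*}
|M(s)|\le 2\varepsilon B(s)+C_\varepsilon(\omega)B(T_\varepsilon)\quad\text{for all } s\ge T_\varepsilon,
\end{equation*}
with $C_\varepsilon(\omega):=\sup_{0\le u\le T_\varepsilon}|\mathcal{Y}_\infty-\mathcal{Y}(u-)|$ a finite pathwise constant.

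Third, I would upgrade to the running supremum. For $s\le T_\varepsilon$, $|M(s)|\le M^{*}(T_\varepsilon)(\omega)$ is a fixed pathwise quantity; for $s\in[T_\varepsilon,t]$, monotonicity of $B$ gives $|M(s)|\le 2\varepsilon B(t)+C_\varepsilon(\omega)B(T_\varepsilon)$. Combining and dividing by $B(t)$,
\begin{equation*}
\frac{M^{*}(t)}{B(t)}\le \frac{M^{*}(T_\varepsilon)(\omega)+C_\varepsilon(\omega)B(T_\varepsilon)}{B(t)}+2\varepsilon.
\end{equation*}
Letting $t\to\infty$ on $\{B(\infty)=\infty\}$ kills the first term, so $\limsup_{t\to\infty} M^{*}(t)/B(t)\le 2\varepsilon$ a.s.; then $\varepsilon\downarrow 0$ completes the proof.

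The main obstacle is the bracket term $[B,\mathcal{Y}]$ when $B$ has jumps. One must verify it is $o(B(t))$ pathwise, which does follow from the convergence of $\mathcal{Y}$ together with $\Delta\mathcal{Y}(s)=\Delta M(s)/B(s-)$, but requires an extra Cesaro-style estimate of $\sum_{s\le t}\Delta B(s)\Delta\mathcal{Y}(s)/B(t)$ that must be folded into the split at $T_\varepsilon$. In the typical application, where $B$ is continuous, this subtlety disappears and the integration-by-parts identity reduces to the clean form $M(t)=B(t)\mathcal{Y}(t)-\int_0^t\mathcal{Y}(s-)\,\d B(s)$, after which the Cesaro argument runs without any technical overhead.
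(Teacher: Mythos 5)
Your proof is correct, and it is worth noting that the paper does not actually give one: it refers to Lemma 2.1 of Sundar \cite{Su2} for the continuous-martingale case and merely asserts that ``a simple calculation gives the extension'' to c\`adl\`ag local martingales. Your write-up supplies precisely what is left implicit there, namely the semimartingale integration by parts $M(t)=B(t)\mathcal{Y}(t)-\int_0^t\mathcal{Y}(s-)\,\d B(s)-[B,\mathcal{Y}]_t$ followed by the pathwise Kronecker/Ces\`aro splitting at $T_\varepsilon$, and you correctly isolate the one place where the c\`adl\`ag setting differs from the continuous one: the covariation $[B,\mathcal{Y}]_t=\sum_{0<s\le t}\Delta B(s)\Delta\mathcal{Y}(s)$, whose tail beyond $T_\varepsilon$ is bounded by $\sup_{u>T_\varepsilon}|\Delta\mathcal{Y}(u)|\cdot\big(B(t)-B(T_\varepsilon)\big)\le 2\varepsilon B(t)$ because $\Delta\mathcal{Y}(u)\to 0$ when $\mathcal{Y}$ converges, exactly the ``extra Ces\`aro-style estimate'' you describe. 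Two harmless slips: your rewritten identity for $M(s)$ drops the constant $B(0)\mathcal{Y}_\infty$ arising from $\int_0^s\mathcal{Y}_\infty\,\d B(u)=\mathcal{Y}_\infty\big(B(s)-B(0)\big)$, and once the bracket is folded in the final bound is $4\varepsilon$ rather than $2\varepsilon$; neither matters, since both extra terms are constants in $s$ (for fixed $\varepsilon$) and are annihilated on dividing by $B(t)\to\infty$ on $\{B(\infty)=\infty\}$. You might also remark that writing $\d M(s)=B(s-)\,\d\mathcal{Y}(s)$ tacitly reads the integrand in the definition of $\mathcal{Y}$ as the predictable version $1/B(s-)$; in the paper's only application $B(t)=\varepsilon+t$ is deterministic and continuous, so this point and the bracket both disappear and your ``clean form'' is all that is ever used.
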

For the proof of continuous martingale see Lemma 2.1 of Sundar
\cite{Su2}. A simple calculation gives the extension of the result
for the c\`{a}dl\`{a}g local martingale.
\begin{lemma}\label{ll2}
Let $\lim_{t\to\infty}\sum_k|\sigma_k(t)|^2=M_1<\infty$,
$\lim_{t\to\infty}\int_{Z}|g(t,z)|^4\lambda(dz)=M_2<\infty$ and
$\lim_{t\to\infty}\int_{Z}|g(t,z)|^2\lambda(dz)=M_3<\infty$. Then
$\lim_{t\to\infty}\frac{M^{*}(t)}{t}=0$ a. s., 
\begin{eqnarray}
M(t)&=&2\sum_k\int_0^t(\sigma_k(s),\x(s))\d
W_k(s)\nonumber\\&&+\int_0^t\int_Z\left[|g(s-,z)|^2+2(g(s-,z),\x(s-))\right]\tilde{N}(ds,dz).
\end{eqnarray}
\end{lemma}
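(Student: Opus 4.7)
The plan is to apply Lemma \ref{ll1} with the choice $B(t)=1+t$. Since $B(\infty)=\infty$ a.s.\ and $(1+t)/t\to 1$, the desired conclusion $M^{*}(t)/t\to 0$ follows from $M^{*}(t)/B(t)\to 0$, provided one can show that $\mathcal{Y}(t):=\int_0^t\frac{1}{1+s}\d M(s)$ converges almost surely to a finite limit as $t\to\infty$. Splitting $M=M_c+M_j$ into its continuous (Brownian) part and jump part, $\mathcal{Y}$ is a c\`{a}dl\`{a}g local martingale.

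To establish the a.s.\ convergence I would verify that $\mathcal{Y}$ is an $L^2$-bounded martingale and invoke Doob's martingale convergence theorem. Using the It\^{o} isometry for the Wiener integral and the predictable quadratic variation formula for the compensated Poisson integral, one computes
\begin{align*}
\mathbb{E}\langle\mathcal{Y}\rangle(t)=&\,4\,\mathbb{E}\int_0^t\frac{\sum_k(\sigma_k(s),\x(s))^2}{(1+s)^2}\d s\\
&+\mathbb{E}\int_0^t\frac{1}{(1+s)^2}\int_Z\bigl[|g(s,z)|^2+2(g(s,z),\x(s))\bigr]^2\lambda(dz)\d s.
\end{align*}
Applying Cauchy--Schwarz in the sum/integral over $k$ and $Z$, and $(a+b)^2\leq 2a^2+2b^2$, this is bounded above by
\begin{equation*}
C\,\mathbb{E}\int_0^t\frac{1}{(1+s)^2}\left\{|\x(s)|^2\Bigl(\textstyle\sum_k|\sigma_k(s)|^2+\int_Z|g(s,z)|^2\lambda(dz)\Bigr)+\int_Z|g(s,z)|^4\lambda(dz)\right\}\d s.
\end{equation*}
By the standing hypotheses, $\sum_k|\sigma_k(s)|^2\to M_1$, $\int_Z|g(s,z)|^2\lambda(dz)\to M_3$, and $\int_Z|g(s,z)|^4\lambda(dz)\to M_2$, so each coefficient is uniformly bounded in $s$.

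The main obstacle is securing a time-uniform bound $\sup_{s\geq 0}\mathbb{E}|\x(s)|^2<\infty$. This I would obtain by applying It\^{o}'s formula to $|\x(t)|^2$, exploiting the coercivity $(\a\x,\x)\geq\lambda_1|\x|^2$ that comes from the Poincar\'{e} inequality, together with the $\ell_2$-summability of $\sigma$ and the $L^2$-integrability of $g$ with respect to $\lambda(dz)$; a Gronwall-type estimate then yields a bound of the form $\mathbb{E}|\x(t)|^2\leq\mathbb{E}|\x(0)|^2 e^{-2\lambda_1 t}+C(M_1,M_3,\lambda_1)$. Once this uniform bound is in hand, $\mathbb{E}\langle\mathcal{Y}\rangle(\infty)<\infty$ because $\int_0^\infty(1+s)^{-2}\d s<\infty$, so $\mathcal{Y}$ is $L^2$-bounded and converges a.s.\ by martingale convergence. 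Lemma \ref{ll1} then yields $M^{*}(t)/(1+t)\to 0$ a.s., and passing from $B(t)=1+t$ to $t$ completes the proof. This approach mirrors the continuous-martingale argument of Sundar \cite{Su2}, with the additional care required to handle the jump martingale's predictable variation via the $M_2,M_3$ hypotheses.
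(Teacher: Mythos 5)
Your proposal is correct and follows essentially the same route as the paper: choose $B(t)$ linear in $t$, bound the (expected) quadratic variation of $\mathcal{Y}(t)=\int_0^t B(s)^{-1}\,\d M(s)$ by the $M_1,M_2,M_3$ hypotheses together with a time-uniform bound on $\mathbb{E}|\x(s)|^2$ obtained from It\^{o}'s formula, the Poincar\'{e} inequality and Gronwall, conclude that $\mathcal{Y}$ converges a.s., and invoke Lemma \ref{ll1}. The only cosmetic differences are your use of $B(t)=1+t$ in place of the paper's $\varepsilon+t$ and your appeal to $L^2$-boundedness plus Doob's convergence theorem where the paper argues from $[\mathcal{Y},\mathcal{Y}]_\infty<\infty$ a.s.; these are interchangeable here.
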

\begin{proof}
Let us set $B(t)=(\e+t)$ in Lemma \ref{ll1}, for any $\e>0$ and
$\mathcal{Y}(t)=\int_0^t\frac{1}{(\e+s)}\d M(s).$ We have (for more
details about the quadratic variation of L\'{e}vy type stochastic
integrals see Section 4.4.3 of Applebaum \cite{Ap}),
\begin{eqnarray}\label{qe2}
&&[\mathcal{Y},\mathcal{Y}]_t=4\sum_k\int_0^t\frac{(\sigma_k(t),\x(s))^2}{(\e+s)^2}\d
s\nonumber\\&&+\int_0^t\int_Z\frac{\left||g(s-,z)|^2+2(g(s-,z),\x(s-))\right|^2}{(\e+s)^2}N(ds,dz)\nonumber\\
&\leq& 4\sum_k\int_0^t\frac{|\sigma_k(s)|^2|\x(s)|^2}{(\e+s)^2}\d
s+2\int_0^t\int_Z\frac{|g(s-,z)|^4}{(\e+s)^2}N(ds,dz)\nonumber\\&&+4
\int_0^t\int_Z\frac{\left|(g(s-,z),\x(s-))\right|^2}{(\e+s)^2}N(ds,dz)\nonumber\\&=&
4\sum_k\int_0^t\frac{|\sigma_k(s)|^2|\x(s)|^2}{(\e+s)^2}\d s
+\int_0^t\int_Z
\frac{\left[2|g(s,z)|^4+4|g(s,z)|^2|\x(s)|^2\right]}{(\e+s)^2}\lambda(dz)ds
\nonumber\\&&+\int_0^t\int_Z
\frac{\left[2|g(s-,z)|^4+4\left|(g(s-,z),\x(s-))\right|^2\right]}{(\e+s)^2}\tilde{N}(ds,dz)
\end{eqnarray}
Taking expectation on both sides of (\ref{qe2}) and noting that the
last term on the right hand side is a martingale having a zero
average (see Proposition 4.10 of R\"{u}diger \cite{Ru}), we have,
\begin{eqnarray}\label{qe21}
\mathbb{E}[\mathcal{Y},\mathcal{Y}]_t &\leq&
4\sum_k\int_0^t\frac{|\sigma_k(s)|^2\mathbb{E}|\x(s)|^2}{(\e+s)^2}\d
s+2\int_0^t\int_Z\frac{|g(s,z)|^4}{(\e+s)^2}\lambda(dz)ds\nonumber\\&&
+4\int_0^t\int_Z\frac{|g(s,z)|^2\mathbb{E}|\x(s)|^2}{(\e+s)^2}\lambda(dz)ds.
\end{eqnarray}
Let $\lambda_1$ be the first eigenvalue of the operator $\a$. Then
Poincar\'{e} inequality gives $\lambda_1|\w|^2\leq\|\w\|^2.$ Hence
by an application of  It\^{o}'s lemma and Poincar\'{e} inequality in
(\ref{qe1}), we have
$$
\mathbb{E}|\x(t)|^2\leq
\mathbb{E}|\x(0)|^2-2\lambda_1\int_0^t\mathbb{E}|\x(s)|^2\d
s+\int_0^t(\sum_k|\sigma_k(s)|^2
+\int_Z|g(s,z)|^2\lambda(dz))\d s.
$$
Hence by Gronwall's inequality,
$$
\mathbb{E}|\x(t)|^2\leq \mathbb{E}|\x(0)|^2e^{-2\lambda_1
t}+\int_0^te^{-2\lambda_1(t-s)}\left(\sum_k|\sigma_k(s)|^2+\int_Z|g(s,z)|^2\lambda(dz)\right)\d
s.
$$
Letting $t$ tend to $\infty$ in (\ref{qe21}), using the assumptions
in Lemma and the above inequality, we have
\begin{eqnarray}\mathbb{E}[\mathcal{Y},\mathcal{Y}]_{\infty}\leq
C\int_0^{\infty}\frac{1}{(\e+s)^2}\d s<\infty.\end{eqnarray} Hence
$[\mathcal{Y},\mathcal{Y}]_{\infty}<\infty$ a. s. which allows us to
conclude that $\lim_{t\to\infty}\mathcal{Y}(t)$ exists almost
surely. Finally we prove the Lemma by invoking Lemma \ref{ll1}.
\end{proof}

\begin{theorem} (Exponential Stability)\label{Them1}
Let $\x$ and $\y$ be two solutions of (\ref{qe1}) with initial
values $\x_0$ and $\y_0$ respectively. Let us assume that
$\lim_{t\to\infty}\sum_k|\sigma_k(t)|^2=M_1<\infty$ and
$\lim_{t\to\infty}\int_{Z}|g(t,z)|^2\lambda(dz)=M_3<\infty$. If
$\frac{2(M_1+M_3)}{\lambda_1}<1$, where $\lambda_1$ is the first
eigenvalue of the operator $\a$, then
$\lim_{t\to\infty}|\x(t)-\y(t)|=0$ a. s.
\end{theorem}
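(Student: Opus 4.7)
The plan is to exploit the fact that the noise in \eqref{qe1} is \emph{additive}: neither $\sigma_k$ nor $g$ depends on the state. Consequently, if $\x$ and $\y$ are two strong solutions of \eqref{qe1} driven by the same Wiener process and the same compensated Poisson random measure, the difference $\w:=\x-\y$ satisfies the \emph{pathwise} equation
\begin{eqnarray*}
\d\w(t)+\bigl[\a\w(t)+\b(\x(t))-\b(\y(t))\bigr]\d t=0,\qquad \w(0)=\x_0-\y_0,
\end{eqnarray*}
with no stochastic integral on the right-hand side. All randomness therefore enters only through the trajectories $\x$ and $\y$ themselves, and the rest of the argument is essentially deterministic.

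First I would apply the standard $V\hookrightarrow H\hookrightarrow V'$ chain rule to $|\w(t)|^2$, use the identity $(\b(\x)-\b(\y),\w)=-(\b(\w),\y)=-b(\w,\w,\y)$ already derived in the proof of Lemma \ref{Mon}, together with $b(\w,\w,\y)=-b(\w,\y,\w)$, the two-dimensional Ladyzhenskaya inequality $\|\w\|_{\mathrm{L}^4(G)}^2\leq c|\w|\|\w\|$, and Young's inequality, to obtain, for every $\e\in(0,1)$,
\begin{eqnarray*}
\frac{\d}{\d t}|\w(t)|^2+(2-2\e)\|\w(t)\|^2\leq \frac{c^2}{2\e}|\w(t)|^2\,\|\y(t)\|^2.
\end{eqnarray*}
Applying the Poincar\'e inequality $\|\w\|^2\geq\lambda_1|\w|^2$ to absorb the second term on the left and Gronwall's lemma then yields the pathwise estimate
\begin{eqnarray*}
|\w(t)|^2\leq |\w(0)|^2\exp\!\Bigl(-(2-2\e)\lambda_1 t+\tfrac{c^2}{2\e}\!\int_0^t\!\|\y(s)\|^2\d s\Bigr).
\end{eqnarray*}

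The remaining task is an almost-sure control of $\tfrac{1}{t}\!\int_0^t\!\|\y(s)\|^2\d s$. I would apply It\^o's formula to $|\y(t)|^2$ exactly as in the proof of Lemma \ref{ll2}: using $(\b(\y),\y)=0$ and the jump correction for $|\y+g|^2-|\y|^2$, one obtains
\begin{eqnarray*}
|\y(t)|^2+2\!\int_0^t\!\|\y(s)\|^2\d s=|\y(0)|^2+\!\int_0^t\!\Bigl(\sum_k|\sigma_k(s)|^2+\!\int_Z\!|g(s,z)|^2\lambda(dz)\Bigr)\d s+M^{\y}(t),
\end{eqnarray*}
where $M^{\y}$ is precisely the c\`adl\`ag martingale treated in Lemma \ref{ll2}. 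Dividing by $t$, invoking that lemma (so $M^{\y}(t)/t\to 0$ a.s.) together with the hypotheses $\sum_k|\sigma_k(t)|^2\to M_1$ and $\int_Z|g(t,z)|^2\lambda(dz)\to M_3$, one deduces
\begin{eqnarray*}
\limsup_{t\to\infty}\tfrac{1}{t}\!\int_0^t\!\|\y(s)\|^2\d s\leq\tfrac{M_1+M_3}{2}\quad\text{a.s.}
\end{eqnarray*}

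Inserting this bound into the Gronwall estimate and optimizing $\e\in(0,1)$, the asymptotic slope of the exponent is strictly negative almost surely precisely under the stated smallness condition $\tfrac{2(M_1+M_3)}{\lambda_1}<1$ (after tracking the trilinear-form constant $c$). Hence $|\w(t)|=|\x(t)-\y(t)|\to 0$ a.s., which is the desired exponential stability. The most delicate ingredient is the pathwise extraction of the time-average bound through Lemma \ref{ll2}: verifying that the mixed Brownian--Poisson martingale $M^{\y}$ genuinely satisfies $M^{\y}(t)/t\to 0$ almost surely is exactly where the standing hypothesis $\int_Z|g(t,z)|^4\lambda(dz)<\infty$ from the beginning of Section 4 is used in an essential way, while the $L^4$--type trilinear estimate for $\b$ requires 2-D specific inequalities and would fail without the Ladyzhenskaya bound.
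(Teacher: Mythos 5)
Your proposal is correct and follows essentially the same route as the paper: the additive noise cancels in the equation for $\w=\x-\y$, a pathwise energy estimate with the trilinear bound, Poincar\'e's inequality and Gronwall's lemma gives $|\w(t)|^2\leq|\w(0)|^2\exp\left(C\int_0^t\|\y(s)\|^2\d s-\lambda_1 t\right)$, and the almost-sure time average $\limsup_{t\to\infty}\frac{1}{t}\int_0^t\|\y(s)\|^2\d s\leq\frac{M_1+M_3}{2}$ is obtained exactly as you describe, via It\^o's formula for $|\y|^2$ and Lemma \ref{ll2}. The only caveat is the bookkeeping of the trilinear constant, which you flag explicitly and which the paper sidesteps by normalizing the constant in $\|\b(\x)\|_{V'}\leq|\x|\,\|\x\|$ to one, so that the exponent's asymptotic slope becomes exactly $2(M_1+M_3)-\lambda_1$.
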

\begin{remark}
A suitable choice of the Reynold's numbers $R_e$ and $R_m$ (i. e.,
viscosity of the fluid flow and the magnetic field) gives
$\lambda_1$ such that $\frac{2(M_1+M_3)}{\lambda_1}<1$.
\end{remark}
\begin{proof}
Since $\x(t)$ and $\y(t)$ are two solutions of (\ref{qe1}), we have
$$
\x(t)-\y(t)+\int_0^t\a(\x(s)-\y(s))\d
s+\int_0^t(\b(\x(s))-\b(y(s)))\d s=\x_0-\y_0.
$$
By using It\^{o}'s formula, we have,
\begin{eqnarray}\label{qe6}
|\w(t)|^2+2\int_0^t\|\w(s)\|^2\d
s+2\int_0^t(\b(\x(s))-\b(\y(s)),\w(s))\d s=|\w(0)|^2,
\end{eqnarray}
where $\w(t)=\x(t)-\y(t)$ and $\w(0)=\x_0-\y_0$. Using
$$|(\b(\x)-\b(\y),\w)|\leq 2\|\w\||\w|\|\y\|\leq
\frac{1}{2}\|\w\|^2+2|\w|^2\|\y\|^2$$ and Poincar\'{e} inequality
 in (\ref{qe6}), we have,
\begin{eqnarray}
|\w(t)|^2+\lambda_1\int_0^t|\w(s)|^2\d s\leq
|\w(0)|^2+4\int_0^t|\w(s)|^2\|\y(s)\|^2\ d s.
\end{eqnarray}
Hence by applying Gronwall's inequality, we get,
\begin{eqnarray}\label{qe8}
|\w(t)|^2\leq |\w(0)|^2\exp\left(4\int_0^t\|\y(s)\|^2\d
s-\lambda_1t\right).
\end{eqnarray}
By applying It\^{o}'s lemma to the function $|x|^2$ and to the
process $\y(t)$ and using the properties of the linear and bilinear
operators, we have,
\begin{eqnarray}
&&|\y(t)|^2+2\int_0^t\|\y(s)\|^2\d
s=|\y(0)|^2+\sum_k\int_0^t|\sigma_k(s)|^2\d
s\nonumber\\&&\qquad\qquad+2\sum_k\int_0^t(\sigma_k(s),\y(s))\d
W_k(s)+\int_0^t\int_Z|g(s-,z)|^2\lambda(dz)ds\nonumber\\&&\qquad\qquad+\int_0^t\int_Z\left[|g(s-,z)|^2+2(g(s-,z),\y(s-))\right]\tilde{N}(ds,dz).
\end{eqnarray}
Hence from above, we get,
\begin{eqnarray}\label{qe10}
&&\frac{2}{t}\int_0^t\|\y(s)\|^2\d s\leq
\frac{|\y(0)|^2}{t}+\frac{1}{t}\sum_k\int_0^t|\sigma_k(s)|^2\d
s\nonumber\\&&\qquad\qquad+\frac{2}{t}\sum_k\int_0^t(\sigma_k(s),\y(s))\d
W_k(s)+\frac{1}{t}\int_0^t\int_Z|g(s-,z)|^2\lambda(dz)ds\nonumber\\
&&\qquad\qquad+\frac{1}{t}\int_0^t\int_Z\left[|g(s-,z)|^2+2(g(s-,z),\y(s-))\right]\tilde{N}(ds,dz).
\end{eqnarray}
By Lemma \ref{ll2}, we have,
\begin{eqnarray}\label{qe11}
&&\lim_{t\to\infty}\frac{1}{t}\left[2\sum_k\int_0^t(\sigma_k(s),\y(s))\d
W_k(s)\right.\nonumber\\&&\left.\qquad+\int_0^t\int_Z\left[|g(s-,z)|^2+2(g(s-,z),\y(s-))\right]\tilde{N}(ds,dz)\right]=0.
\end{eqnarray}
From the assumption of the theorem, we have,
\begin{eqnarray}\label{qe12}
\lim_{t\to\infty}\frac{1}{t}\sum_k\int_0^t|\sigma_k(s)|^2\d
s=M_1\textrm{ and
}\lim_{t\to\infty}\frac{1}{t}\int_0^t\int_Z|g(s,z)|^2\lambda(dz)ds=M_3.
\end{eqnarray}
By using (\ref{qe11}) and (\ref{qe12}) in (\ref{qe10}), we have,
$
\lim_{t\to\infty}\frac{1}{t}\int_0^t\|\y(s)\|^2\d s\leq
\left(\frac{M_1+M_3}{2}\right).
$
Using this bound in (\ref{qe8}) provided
$\frac{2(M_1+M_3)}{\lambda_1}<1$, we get
$\lim_{t\to\infty}|\x(t)-\y(t)|=0$ a. s.
\end{proof}

\begin{theorem} (Existence and Uniqueness of Invariant Measures)
Let $\sigma(t,x)=\sigma(x)$ and $g(t,x,z)=g(x,z)$. Then under the
hypotheses of the above theorem, there exists a unique stationary
measure with support in $V$, for the solution $\x(t,x,\omega)$ of
the stochastic MHD system with L\'{e}vy noise.
\end{theorem}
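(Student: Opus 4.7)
The plan is to establish existence by a Krylov--Bogoliubov argument and uniqueness by the exponential stability already proved in Theorem \ref{Them1}. Let $(P_t)_{t\geq 0}$ denote the Markov transition semigroup on bounded measurable functions $\phi:H\to\mathbb{R}$ defined by $P_t\phi(x)=\mathbb{E}[\phi(\x(t,x,\omega))]$, where $\x(\cdot,x,\omega)$ is the strong solution with $\x(0)=x$ provided by Theorem \ref{existence}.

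The first step is to verify the Feller property of $(P_t)$, i.e. $P_t:C_b(H)\to C_b(H)$. This follows from the continuous dependence of $\x(t,x,\cdot)$ on the initial datum $x\in H$: applying It\^{o}'s formula to $e^{-r(t)}|\x(t,x)-\x(t,y)|^2$ with $r(t)=2C\int_0^t\|\x(s,y)\|_{\mathrm{L}^4}^4 \d s$ exactly as in Part II of Theorem \ref{existence}, the local monotonicity from Lemma \ref{Mon} and hypothesis (H.3) yield $\mathbb{E}\left[e^{-r(t)}|\x(t,x)-\x(t,y)|^2\right]\leq|x-y|^2$. Combined with the $\mathrm{L}^4$ moment bound from Theorem \ref{thm99} (applied with $p=4$), a standard continuity argument provides $\mathbb{E}|\phi(\x(t,x))-\phi(\x(t,y))|\to 0$ as $y\to x$ for every $\phi\in C_b(H)$.

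The second step is tightness of the time-averaged laws. Fix $x_0\in V$ and set $\mu_T(\cdot):=\frac{1}{T}\int_0^T P_t^*\delta_{x_0}(\cdot)\d t$. The key observation is that, for the additive noise considered in \eqref{qe1}, It\^{o}'s formula applied to $|x|^2$ gives
\begin{equation*}
\mathbb{E}|\x(t)|^2+2\int_0^t\mathbb{E}\|\x(s)\|^2\d s=|x_0|^2+\int_0^t\Big(\sum_k|\sigma_k(s)|^2+\int_Z|g(s,z)|^2\lambda(dz)\Big)\d s,
\end{equation*}
so $\frac{1}{T}\int_0^T\mathbb{E}\|\x(t)\|^2\d t\leq C$ uniformly in $T$. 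By Chebyshev and the compact embedding $V\hookrightarrow H$, the set $K_R:=\{\y\in H:\|\y\|_V\leq R\}$ is precompact in $H$ and
\begin{equation*}
\mu_T(H\setminus K_R)\leq \frac{1}{R^2 T}\int_0^T\mathbb{E}\|\x(t)\|^2\d t\leq \frac{C}{R^2},
\end{equation*}
so $\{\mu_T\}_{T\geq 1}$ is tight in $H$. Combining the Feller property with tightness, the Krylov--Bogoliubov theorem yields an invariant probability measure $\mu$, and the bound $\mu(K_R)\geq 1-C/R^2$ (using that $K_R$ is closed in $H$ and the Portmanteau theorem) implies $\mu(V)=1$.

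The third and final step is uniqueness, where the exponential stability does all the work. If $\mu_1,\mu_2$ are two invariant measures, let $\x(\cdot,x,\omega)$ and $\y(\cdot,y,\omega)$ be solutions driven by the same Wiener process and compensated Poisson random measure, with independent initial conditions distributed as $\mu_1$ and $\mu_2$ respectively. For any $\phi\in C_b(H)$, invariance gives
\begin{equation*}
\left|\int_H\phi\d\mu_1-\int_H\phi\d\mu_2\right|\leq\int_H\int_H\mathbb{E}|\phi(\x(t,x))-\phi(\y(t,y))|\d\mu_1(x)\d\mu_2(y).
\end{equation*}
By Theorem \ref{Them1}, $|\x(t,x)-\y(t,y)|\to 0$ a.s. for every $(x,y)$, so by bounded convergence (using $\|\phi\|_\infty$) the right hand side tends to $0$ as $t\to\infty$. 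Hence $\int\phi\d\mu_1=\int\phi\d\mu_2$ for all $\phi\in C_b(H)$, forcing $\mu_1=\mu_2$.

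The main technical obstacle I anticipate is the Feller property: the local (rather than global) monotonicity means the continuous dependence has to be controlled through a stopping-time or exponential-weight argument using the $\mathrm{L}^4$ norm of one of the trajectories, rather than via a direct Gronwall estimate. Everything else reduces to invoking the energy bounds of Theorem \ref{energy}, the compact embedding $V\hookrightarrow H$, and the exponential stability already established.
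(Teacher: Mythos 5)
Your proposal is correct and follows essentially the same route as the paper: Krylov--Bogoliubov via the Feller property and tightness of the time-averaged laws (obtained from the energy estimate and the compact embedding $V\hookrightarrow H$) for existence, and the exponential stability of Theorem \ref{Them1} combined with a Fubini/dominated-convergence argument for uniqueness. The only cosmetic difference is that the paper passes through the Ces\`{a}ro averages $\frac{1}{T}\int_0^T\mathbb{E}|\phi(\x^x(t))-\phi(\x^y(t))|\,\d t$ while you use invariance at a fixed time $t$ and let $t\to\infty$; both are valid, and your sketch of the Feller property is in fact more explicit than the paper's, which merely asserts it.
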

\begin{proof}
The method of proving existence of an invariant measure is well
known in the literature and interested readers may look at the works
by Barbu and Da Prato \cite{VB}, Chow and Khasminskii \cite{CK}, Da
Prato and Zabczyk \cite{DaZ,DaZ1}, Da Prato and Debussche
\cite{DaD1}, Flandoli \cite{Fl}, Odasso \cite{Od}, Sundar
\cite{Su2,Su1} for the extensive study on the subject. For the sake
of completeness we are giving a very brief outline of the proof.

Since the semigroup associated with the stochastic MHD system
(\ref{qe1}) has the Feller property, first one proves the time
averages $(\left\{\mu_T\right\}_{T\geq 0})$ of law of the solution
(probability measure) is tight in $H$ by the method developed by
Chow and Khasminskii (see \cite{CK}) from the energy estimates.
 Hence due to Prokhorov's Theorem $\{\mu_T\}_{T\geq
0}$ in $H$ is relatively compact. So there exists a sequence
$\mu_{T_n}$ weakly convergent to a probability measure $\mu$.
Finally one proves $\mu$ is invariant by the classical method of
Krylov and Bogoliubov (Theorem 3.2 (step 4) of Flandoli \cite{Fl}).
One can also prove the following moment estimates of the invariant
measure, $\mu\left(\|\cdot\|^2\right)\leq
\left(\frac{K}{2-K}\right)$ and $\mu[|\cdot|^p]\leq C(K,K_1,p)$.

For uniqueness, let us assume that $\mu_1$ and $\mu_2$ be two
probability measures on $H$ that are stationary for the equation
(\ref{qe1}). We have to show that $\mu_1=\mu_2$. For this we have to
prove for all $\phi\in C_b(H)$,
$\int_H\phi(z)\d\mu_1(z)=\int_H\phi(z)\d\mu_2(z).$ Let $\x^{\xi}$ be
the solution of the equation (\ref{qe1}) with $\x(0)=\xi$. Since
$\mu_1$ and $\mu_2$ are invariant measures, by definition we have,
$\mu_1(B)=\int_HP_{\xi}(t,B)\d \mu_1(\xi)$ and $\mu_2(B)=\int_HP_{\xi}(t,B)\d
\mu_2(\xi),$ where $P_{\xi}(t,B)=\mathbb{P}\left\{\x^{\xi}(t)\in
B\right\}\textrm{ and }\x^{\xi}(0)=\xi.$ Define
$\mu_t^{\xi}(B)=\frac{1}{t}\int_0^tP_{\xi}(s,B)\d s$ for all
$B\in\mathscr{B}(H)$. Also, we have,
$\mathbb{E}\left(\phi(\x^{\xi}(t))\right)=\int_H\phi(x)P_{\xi}(t,\d
x).$ By using Fubini's theorem, one gets,
$$
\int_H\mu_T^x(\d
z)\d\mu_1(x)=\frac{1}{T}\int_0^T\left(\int_HP_x(t,\d z)\d
\mu_1(x)\right)\d t=\frac{1}{T}\int_0^T\mu_1(\d z)\d
t=\mu_1(\d z).
$$
Similarly, we have, $ \int_H\mu_T^y(\d z)\d\mu_2(y)=\mu_2(\d z).$

Now for proving the uniqueness of stationary measures, for $\phi\in
C_b(H)$, we use Fubini's theorem, Jensen's inequality and
stationarity of invariant measures to get,
\begin{eqnarray}\label{qe22}
&&\left|\int_H\phi(z)\d\mu_1(z)-\int_H\phi(z)\d\mu_2(z)\right|
\nonumber\\
&&=\left|\int_H\int_H\phi(z)\mu_T^x(\d
z)\d\mu_1(x)-\int_H\int_H\phi(z)\mu_T^y(\d
z)\d\mu_2(y)\right|\nonumber\\
&&=\left|\frac{1}{T}\int_0^T\left[\int_H\int_H\phi(z)P_x(t,\d
z)\d\mu_1(x)-\int_H\int_H\phi(z)P_y(t,\d
z)\d\mu_2(y)\right]\d t\right|\nonumber\\
&&=\left|\frac{1}{T}\int_0^T\int_H\mathbb{E}\left(\phi(\x^x(t))\right)\d\mu_1(x)\d
t-\frac{1}{T}\int_0^T\int_H\mathbb{E}\left(\phi(\x^y(t))\right)\d\mu_2(y)\d
t\right|\nonumber\\
&&=\left|\frac{1}{T}\int_0^T\left[\int_H\int_H\mathbb{E}\left(\phi(\x^x(t))\right)\d\mu_1\d\mu_2
-\int_H\int_H\mathbb{E}\left(\phi(\x^y(t))\right)\d\mu_2\d\mu_1\right]\d
t\right|\nonumber\\
&&\leq
\frac{1}{T}\int_H\int_H\int_0^T\mathbb{E}\big|\phi(\x^x(t))-\phi(\x^y(t))\big|
\d t\d\mu_1(x)\d \mu_2(y).
\end{eqnarray}
By the exponential stability (Theorem \ref{Them1}) and the
continuity of $\phi$, we have $|\phi(\x^{x}(t))-\phi(\x^{y}(t))|\to
0$ a. s. as $t\to \infty$. Therefore, we get,
$\frac{1}{T}\int_0^T|\phi(\x^{x}(t))-\phi(\x^{y}(t))|\d t\to 0$ as
$T\to\infty$. Hence, by dominated convergence theorem, the last term
in the inequality (\ref{qe22}) tends to $0$ as $T\to \infty$.
\end{proof}

\begin{remark}
Since the abstract functional setting for a class of nonlinear
stochastic hydrodynamic models perturbed by L\'{e}vy noise, namely
\textit{$2D$ Navier-Stokes equations}, \textit{$2D$ Boussinesq model
for the B\'{e}nard convection}, \textit{$2D$ magnetic B\'{e}nard
problem}, \textit{$3D$ Leray $\alpha$-model for Navier-Stokes
equations}, \textit{Shell models of turbulence} are same as that of
\textit{$2D$ magneto-hydrodynamic equations}, the main results
discussed in this paper will hold for these models also.
\end{remark}

\par\bigskip\noindent
{\bf Acknowledgements:} Manil T. Mohan would like to thank Council
of Scientific and Industrial Research (CSIR), India for a Senior
Research Fellowship (SRF). The authors would also like to thank
Indian Institute of Science Education and Research (IISER)-
Thiruvananthapuram for providing stimulating scientific environment
and resources. We thank Prof. P. Sundar for fruitful discussions
about invariant measures during his visit to IISER
Thiruvananthapuram and also bringing our notice to his works
\cite{Su2, Su1} on the subject. The authors would also like to thank
the anonymous referee for his/her valuable comments.

\end{document}